\pgfplotsset{compat=1.16}
\theoremstyle{plain}
\newtheorem{Theorem}{Theorem}[section]
\newtheorem{Proposition}[Theorem]{Proposition}
\newtheorem{Lemma}[Theorem]{Lemma}
\theoremstyle{definition}
\newtheorem{Remark}[Theorem]{Remark}
\newtheorem{Definition}[Theorem]{Definition}
\newtheorem{Example}[Theorem]{Example}
\newtheorem{Notation}[Theorem]{Notation}
\theoremstyle{remark}
\DeclareMathOperator{\Diff}{Diff}
\DeclareMathOperator{\Ric}{Ric}
\DeclareMathOperator{\Riem}{Riem}
\newcommand{\bbN}{\mathbb{N}}
\newcommand{\bbR}{\mathbb{R}}
\newcommand{\bbZ}{\mathbb{Z}}
\newcommand{\calF}{\mathcal{F}}
\newcommand{\calX}{\mathcal{X}}
\newcommand{\frakg}{\mathfrak{g}}
\newcommand{\Volg}{{\mathrm{vol}_g}}
\newcommand{\id}{\mathrm{id}}
\newcommand{\Lie}{\mathcal{L}}
\newcommand{\Div}{\mathrm{div}}
\newcommand{\Dtot}{\mathbf{d}}
\newcommand{\calLor}{{\mathcal{L}\mathrm{or}}}
\newcommand{\Lor}{{\mathrm{Lor}}}
\renewcommand{\phi}{\varphi}
\renewcommand{\epsilon}{\varepsilon}
\newcommand{\Hide}[1]{}
\renewcommand{\Hide}[1]{{\color{blue}#1}}
\begin{document}

\title[The homotopy momentum map of general relativity]
{The homotopy momentum map of\\
general relativity}

\author[C.~Blohmann]{Christian Blohmann}
\address{Max-Planck-Institut f\"ur Mathematik, Vivatsgasse 7, 53111 Bonn, Germany}
\email{blohmann@mpim-bonn.mpg.de}

\subjclass[2020]{53D20; 18N40, 37K06, 83C05}

\date{\today}

\keywords{general relativity, multisymplectic geometry, homotopy momentum map, lagrangian field theory, variational bicomplex, diffeomorphism symmetry}

\begin{abstract}
We show that the action of spacetime vector fields on the variational bicomplex of general relativity has a homotopy momentum map that extends the map from vector fields to conserved currents given by Noether's first theorem to a morphism of $L_\infty$-algebras.
\end{abstract} 
\maketitle



\section{Introduction}

\subsection{Motivation}

The diffeomorphism symmetry of general relativity, a mathematical implementation of the Einstein equivalence principle, is one of its defining features. In contrast to the internal symmetry of gauge theories, diffeomorphisms are external symmetries since they act not only on the fields (i.e.~lorentzian metrics), but also on spacetime. The initial value problem, which yields the hamiltonian formulation of the field dynamics, and the presymplectic structure on the space of fields, which yields the Poisson bracket of observables, both depend on the choice of a codimension 1 submanifold as initial time-slice. But such a submanifold is not invariant under diffeomorphisms. In physics terminology: it breaks the symmetry. The consequence is that the basic ingredients of quantization, the hamiltonian and the Poisson bracket, are not compatible with the diffeomorphism symmetry. This issue lies at the heart of some of the fundamental open problems in general relativity and has captivated the interest of many authors since the 1960s.

One of its mathematical symptoms is that the action of the group of diffeomorphisms and the action of the Lie algebra of vector fields are not hamiltonian. More precisely, Noether's first theorem, which associates to a symmetry a conserved momentum does not define a homomorphism of Lie algebras. (The Noether momenta are the components of the Einstein tensor integrated over the codimension 1 submanifold.) Worse, the space of Noether momenta is not even closed under the Poisson bracket.

In an earlier paper we could show that there is a natural diffeological groupoid describing the choices of initial submanifolds, which exhibits the Poisson brackets as the bracket of its Lie algebroid \cite{BlohmannFernandesWeinstein:2013}. Next, we have developed a notion of hamiltonian Lie algebroids, which generalizes the notion of hamiltonian Lie algebra action to the setting of Lie algebroids \cite{BlohmannWeinstein:2018}. We have conjectured that the Noether charges of general relativity are the components of the momentum section of a hamiltonian Lie algebroid, which would give a conceptual explanation of some of the intriguing features of the constraint functions. Finally, in \cite{BlohmannSchiavinaWeinstein:2022} we have interpreted the momenta as elements of a generalized Lie-Rinehart algebra, which is connected to the BV-BFV approach to boundary conditions in classical field theories.

In this paper, we sidestep the choice of initial submanifolds altogether by using higher algebraic structures. We show that the map from vector fields to their Noether currents is part of a homotopy momentum map in the sense of multisymplectic geometry. 

\subsection{Content and main results}

In Sec.~\ref{sec:LFT} we study the premultisymplectic form $\omega = EL + \delta \gamma$ of a lagrangian field theory (LFT), where $EL$ is the Euler-Lagrange form and $\gamma$ a boundary form. We prove in Prop.~\ref{prop:HamCurr} that the obstruction of a premultisymplectic vector field $X$ to be hamiltonian lies in bidegree $(0,n-1)$ of the variational bicomplex, where $n$ is the dimension of the spacetime manifold. This shows that the $L_\infty$-algebra of hamiltonian forms can be interpreted as generalized current algebra. We introduce the notion of manifest diffeomorphism symmetry (Def.~\ref{def:ManifestDiffSym}) and observe that every such symmetry has a hamiltonian momentum map that is given explicitly in terms of the lagrangian and the boundary form (Prop.~\ref{prop:ManifestMomMap}).

In Sec.~\ref{sec:VarBicomplex} we consider the case of general relativity. Generalizing the concept of tensor fields, we introduce the notion of covariant and contravariant families of forms in the variational bicomplex (Def.~\ref{def:CovariantForms}). We then show that the product of families, the contraction of indices, the raising and lowering of indices, etc.~have properties that are analogous to tensor fields. In Sec.~\ref{sec:CovDerForms} we introduce the notion of covariant derivative of families of forms. In Props.~\ref{prop:DivFormula1} and~\ref{prop:DivFormula2} we derive divergence formulas that show that horizontally exact forms can be expressed as the contraction of families of forms with the covariant derivative.

Sec.~\ref{sec:GRmomentum} contains the main results. In Thm.~\ref{thm:LepageInvariant} we prove that the Lepage form $L + \gamma$, which is the primitive of the premultisymplectic form $\omega$, is invariant under the action of spacetime vector fields. This implies that the action has a homotopy momentum map, which is described explicitly in Thm.~\ref{thm:GRHomMomentum}.

\subsection{Relation to previous work}
\label{sec:PreviousWork}

The study of multisymplectic forms in classical field theory goes back to at least the 1970s. Best known is perhaps the highly influential, but never finished GiMmsy project (named by the initials of the collaborators involved, with the main protagonists Gotay and Marsden capitalized). Their goal was ``to explore some of the connections between initial value constraints and gauge transformations'' in classical field theories with constraints, such as general relativity \cite[p.~1]{GiMmsy1}. Towards this end they introduced the notion of multimomentum maps \cite[p.~46]{GiMmsy1} (see also \cite[Sec.~4.2]{CarinenaCrampinIbort:1991}). Given the action $\rho: \frakg \to \calX(M)$ of a Lie algebra on a manifold $M$ with a closed $(n+1)$-form $\omega$, a multimomentum map was defined as a smooth map $M \to \frakg^* \otimes \wedge^{n-1} T^* M$ or, equivalently, a linear map $f: \frakg \to \Omega^{n-1}(M)$, such that $d\,f(a) = - \iota_{\rho(a)} \omega$ for all $a \in \frakg$.

Missing from this definition was a requirement of compatibility with the Lie bracket of $\frakg$, analogous to hamiltonian momentum maps in symplectic geometry. It seems natural to require $f$ to be $\frakg$-equivariant. Alternatively, the hamiltonian forms in $\Omega^{n-1}(M)$ can be equipped with a ``Poisson bracket'' and $f$ required to commute with the brackets. However, both conditions turn out to be too strong and rarely satisfied. Moreover, the ``Poisson bracket'' does not satisfy the Jacobi identity, so that it is not immediately clear for what kind of algebraic structure $f$ should be a homomorphism.

This lack of compatibility of algebraic structures leads to issues in the study of the constraints of classical field theories with diffeomorphism symmetries, of which general relativity is the theory ``par excellence'' \cite[Interlude I, p.~52]{GiMmsy1}. The constraint functions of general relativity are given by the values of the multimomentum map integrated over the Cauchy surface. The resulting map is called the energy-momentum map \cite[Sec.~7B]{GiMmsy2}. While the energy-momentum map shows that the constraint functions arise from the multimomentum map, it does not explain the relation between the Lie brackets of the symmetry algebra of vector fields and the Poisson brackets of the constraints. (For the history of this often studied but elusive problem see Sec.~4 of \cite{BlohmannFernandesWeinstein:2013}.) From the viewpoint of homotopical algebra, this was to be expected: Lie brackets that satisfy the Jacobi identity up to exact terms and maps that preserve the brackets up to exact terms are generally not compatible with homotopies of the underlying complexes. Instead we have to use the homotopy algebraic structure, i.e.\ the minimal extension of Lie algebras to differential complexes that is stable under quasi-isomorphisms. For a better behaved theory of multimomentum maps, we are thus led to $L_\infty$-algebras.

In \cite[Thm.~5.2]{Rogers:2012} it was shown that the bracket on hamiltonian forms in $\Omega^{n-1}(M)$ has a natural extension to an $L_\infty$-algebra structure on a graded subspace of the de Rham complex, with the 1-bracket given by the de Rham differential.\footnote{In \cite{BarnichEtAl:1998} it was shown that the bracket of integrated local functions on the jet bundle has extensions to alternative $L_\infty$-algebra structures on cohomological resolutions. However, these $L_\infty$-structures were not given by an explicit construction, depended on choices, and did not suggest a stronger notion of multimomentum maps.} It was realized in \cite[Def./Prop.~5.1]{CalliesFregierRogersZambon:2016} that this is the natural setting for the generalization of \emph{hamiltonian} momentum maps to the multisymplectic setting, defined as morphisms $\mu: \frakg \to L_\infty(M,\omega)$ of $L_\infty$-algebras. The $\mu_1$ component of every homotopy momentum map is a multimomentum map \cite[Sec.~12.1]{CalliesFregierRogersZambon:2016}. For the obstructions to lifting a multimomentum map to a homotopy momentum map see \cite[Sec.~9.2]{CalliesFregierRogersZambon:2016}.

In local lagrangian field theories, a multimomentum map is given by Noether's theorem \cite[Sec.~4.1]{BridgesEtAl:2010}. Finding a homotopy momentum map, however, is a much more difficult problem, even more so in general relativity, where the Hilbert-Einstein lagrangian is non-polynomial in the fields and of second jet order. The situation simplifies greatly if the multi(pre)symplectic form has a primitive, $\omega = d\lambda$, and if the action leaves $\lambda$ invariant. Then a homotopy momentum map can be defined by inserting the fundamental vector fields in $\lambda$ \cite[Sec.~8.1]{CalliesFregierRogersZambon:2016}. If we want to check whether this applies to a classical field theory, we have to identify the correct $\lambda$ as well as the correct action of the diffeomorphism group. Many authors use for $\lambda$ the boundary 1-form, so that $\omega$ is the universal current in the sense of \cite{Zuckerman:1986}, and assume that the action is vertical (e.g.~\cite{BridgesEtAl:2010}). We will show that instead we have to take the sum of the lagrangian and the boundary 1-form for $\lambda$ and the diagonal action~\eqref{eq:DiffActionGR} on fields and spacetime by vertical and horizontal vector fields.

\subsection{Conventions}
\label{sec:Conventions}

The spacetime manifold $M$ is assumed to be smooth, finite-dimensional, and second countable. The infinite jet manifold $J^\infty F$ of a smooth fibre bundle $F \to M$ is viewed as pro-manifold, so that its de Rham complex, i.e.~the variational bicomplex, is an ind-differential complex. For the computations and proofs in this paper, however, this will not play much of a role. The same goes for the diffeological structure on the space of fields $\calF = \Gamma(M,F)$, of which we will only use the fact that the diffeological tangent bundle is given by the space of sections of the vertical tangent bundle $T\calF = \Gamma(M,VF)$. For the grading and differentials of the variational bicomplex we use the notation of \cite{DeligneFreed:1999}: A form in $\Omega^{p,q}(J^\infty F)$ has vertical degree $p$ and horizontal degree $q$. The vertical differential is denoted by $\delta$ and the horizontal differential by $d$. Otherwise, we follow \cite{Anderson:1989}, with specific references given in the text. We use the summation convention throughout the paper, so that all repeated indices are being summed over.

\begin{Remark}
\label{rmk:Momentum}
Instead of ``momentum map'', many authors use the term ``moment map'', which derives from a mistranslation of the French term ``moment'' as in ``moment cin\'etique'' (angular momentum) or ``application moment'' \cite{Souriau:1970}.
\end{Remark}

\subsection{Brief review of homotopy momentum maps}
\label{sec:BriefReviewHomMom}

For the reader's convenience, we give a brief review of the main notions of multisymplectic geometry used in this paper. This is also necessary to fix the notation, the choice of gradings, and the signs.

Let $M$ be a manifold with a closed $(n+1)$-form $\omega$. A pair $(X,\alpha)$ consisting of a vector field $X \in \calX(M)$ and a form $\alpha \in \Omega^{n-1}(M)$ is called \textbf{hamiltonian} if
\begin{equation*}
  \iota_X \omega = - d\alpha \,.
\end{equation*}
A vector field or a form is called hamiltonian if it is part of a hamiltonian pair. We denote the space of hamiltonian vector fields by $\calX_\mathrm{ham}(M)$ and the space of hamiltonian forms by $\Omega^{n-1}_\mathrm{ham}(M)$.

For the pair $(M,\omega)$ we can construct an $L_\infty$-algebra $L_\infty(M,\omega)$ defined as follows \cite[Thm.~5.2]{Rogers:2012}. The $\bbZ$-graded vector space is
\begin{equation*}
  L_\infty(M,\omega)_i = 
  \begin{cases}
    \Omega^{n-1}_\mathrm{ham}(M) &; i = 0 \\
    \Omega^{n-1 + i}(M) &; 1-n \leq i < 0 \\
    0 & ; \text{otherwise} \,.
  \end{cases} 
\end{equation*}
The brackets $l_k: \wedge^k L_\infty(M,\omega) \to L_\infty(M,\omega)$ are defined by
\begin{equation*}
  l_1 (\alpha_1) = d\alpha_1 
\end{equation*}
for $\deg \alpha_1 < 0$, by
\begin{equation*}
\begin{split}
  l_k(\alpha_1 \wedge \ldots \wedge \alpha_k) 
  &= -(-1)^{\tfrac{1}{2}k(k+1)} 
    \iota_{X_k} \cdots \iota_{X_2} \iota_{X_1} \omega
  \\
  &= -(-1)^{k} 
    \iota_{X_1} \iota_{X_2} \cdots  \iota_{X_k} \omega  
  \,,     
\end{split}
\end{equation*}
for $k>1$ and $\deg \alpha_1 = \ldots = \deg \alpha_k = 0$ 
where $(X_i,\alpha_i)$ are hamiltonian pairs, and by zero in all other cases. With this degree convention, the degree of $l_k$ is $2-k$.

\begin{Definition}[Def./Prop.~5.1 in \cite{CalliesFregierRogersZambon:2016}]
Let $M$ be a manifold with a closed $(n+1)$-form $\omega$. Let $\rho: \frakg \to \calX(M)$ be a homomorphism of Lie algebras. A \textbf{homotopy momentum map} of the action $\rho$ is a homomorphism of $L_\infty$-algebras
\begin{equation*}
  \mu: \frakg \longrightarrow L_\infty(M,\omega)
  \,,
\end{equation*}
such that
\begin{equation*}
  \iota_{\rho(a)} \omega = - d\, \mu_1(a)
\end{equation*}
for all $a \in \frakg$.
\end{Definition}

We recall that a morphism $\mu: L' \to L$ of $L_\infty$-algebras is given by a family of linear maps $\mu_k: \wedge^k L' \to L$, $k \geq 1$ of degree $1-k$, subject to relations that are best expressed either in terms of the $L_\infty$-operad or in the language of formal pointed manifolds. If the domain $L' = \frakg$ is a Lie algebra, as is the case for a homotopy momentum map, the conditions for $\mu$ to be a morphism simplify greatly. They are best expressed in terms of the boundary operator $\delta: \wedge^\bullet \frakg \to \wedge^{\bullet - 1}\frakg$ of the Chevalley-Eilenberg complex for Lie homology,
\begin{equation*}
  \delta(a_1 \wedge \ldots \wedge a_k) =
  \sum_{1 \leq i<j \leq k} (-1)^{i+j}
  [a_i, a_j] \wedge a_1 \wedge \ldots \hat{a}_i \ldots
  \hat{a}_j \wedge \ldots \wedge a_k
  \,.
\end{equation*}
A collection of linear maps $\mu_k: \wedge^k \frakg \to L_\infty(M,\omega)$ is a homotopy momentum map if and only if \cite[Prop.~3.8]{CalliesFregierRogersZambon:2016}
\begin{equation*}
 d \mu_k(a_1 \wedge \ldots \wedge a_k) 
 + \mu_{k-1}\delta(a_1 \wedge \ldots \wedge a_k)
 = (-1)^{\tfrac{1}{2}k(k+1)} \iota_{\rho(a_k)} \cdots \iota_{\rho(a_1)} \omega
 \,,
\end{equation*}
for all $1 \leq k \leq n$, where we set $\mu_{0} := 0$ and $\mu_{n + 1} := 0$. This relation can be interpreted homotopically as follows. Shifting the degree of $\frakg$ by $1$ and shifting the degree of the de Rham complex by $n-1$, the right hand side can be expressed in terms of the degree 0 map
\begin{equation*}
\begin{aligned}
  \nu: S(\frakg[1])
  &\longrightarrow \Omega(M)[n+1]
  \\
  \nu(a_1 \wedge \cdots \wedge a_k)
  &:=
  (-1)^{k} \iota_{\rho(a_1)} \cdots \iota_{\rho(a_k)} \omega
  \,,
\end{aligned}
\end{equation*}
where we have used that $S(\frakg[1])_{-k} \cong \wedge^k \frakg$. The maps $\mu_k$ have degree $-1$. The condition for $\mu$ to be a morphism of $L_\infty$-algebras can be written succinctly as  \cite[Sec.~6.2]{CalliesFregierRogersZambon:2016}
\begin{equation}
\label{eq:muNullHomotopy}
  d\mu_k + \mu_{k-1} \delta = \nu
  \,,
\end{equation}
that is, a homotopy momentum map $\mu$ is a null-homotopy of the map of cochain complexes $\nu$. 

In degree $k=1$ the condition~\eqref{eq:muNullHomotopy} reads $d\mu_1(a_1) = - \iota_{\rho(a_1)} \omega$, that is $\bigl( \rho(a_1), \mu_1(a_1) \bigr)$ is a hamiltonian pair. With this relation, $\nu$ can be expressed in terms of the $L_\infty$-brackets as
\begin{equation*}
  \nu(a_1 \wedge \ldots \wedge a_k)
  = - l_k\bigl( \mu_1(a_1) \wedge \ldots \wedge \mu_1(a_k) \bigr)
  \,.
\end{equation*}
For $k=2$, Eq.~\eqref{eq:muNullHomotopy} is spelled out as
\begin{equation}
\label{eq:muHomLie}
  l_2\bigl( \mu_1(a_1) \wedge \mu_1(a_2) \bigr)
  =
  \mu_1\bigl( [a_1, a_2] \bigr) 
  - d\mu_2(a_1 \wedge a_2)
  \,,
\end{equation}
which shows that the failure of $\mu_1$ to be a homomorphism of Lie algebras is a $d$-exact term. For $k=3$ we obtain
\begin{align*}
  l_3\bigl( \mu_1(a_1) \wedge \mu_1(a_2) \wedge \mu_1(a_3) \bigr)
  &=
  \mu_2(
      [a_1, a_2] \wedge a_3 
    + [a_2, a_3] \wedge a_1 
    + [a_3, a_1] \wedge a_2 )
  \\
  &{}\quad
  - d\mu_3(a_1 \wedge a_2 \wedge a_3)
  \,.
\end{align*}

\begin{Proposition}[Sec.~8.1 in \cite{CalliesFregierRogersZambon:2016}]
\label{prop:ExactMomentum}
Let $\omega = d\lambda$ for some $\lambda \in \Omega^n(M)$. If $\lambda$ is invariant under the action $\rho: \frakg \to \calX(M)$, i.e.
\begin{equation*}
  \Lie_{\rho(a)} \lambda = 0
\end{equation*}
for all $a \in \frakg$, then it has a homotopy momentum map $\mu: \frakg \to L_\infty(M,\omega)$ given by
\begin{equation*}
  \mu_k(a_1 \wedge \ldots \wedge a_k)
  = \iota_{\rho(a_1)} \cdots \iota_{\rho(a_k)} \lambda
  \,.
\end{equation*}
\end{Proposition}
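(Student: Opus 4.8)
The plan is to verify directly the characterization of homotopy momentum maps recalled above, i.e.\ condition \eqref{eq:muNullHomotopy}, $d\mu_k + \mu_{k-1}\delta = \nu$, for every $1 \le k \le n$. Writing $X_i := \rho(a_i)$ and abbreviating $\iota_i := \iota_{X_i}$, $L_i := \Lie_{X_i}$, the two structural inputs are Cartan's magic formula $L_i = d\iota_i + \iota_i d$, which lets one commute $d$ through the interior products defining $\mu_k = \iota_1 \cdots \iota_k \lambda$, and the fact that $\rho$ is a homomorphism of Lie algebras, so that $[X_i, X_j] = \rho([a_i,a_j])$ and hence $[L_i, \iota_j] = \iota_{[X_i,X_j]} = \iota_{\rho([a_i,a_j])}$. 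The invariance hypothesis $L_i\lambda = 0$ will be used to kill every Lie-derivative term that arises.

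First I would record the degree and dispose of the case $k=1$. Since $\lambda \in \Omega^n(M)$, the form $\mu_k = \iota_1\cdots\iota_k\lambda$ lies in $\Omega^{n-k}(M)$, so $\mu_k$ has the correct degree $1-k$; in particular $\mu_{n+1} = 0$ automatically, because contracting the $n$-form $\lambda$ with $n+1$ vector fields vanishes, matching the convention $\mu_{n+1} := 0$. For $k=1$, Cartan's formula together with $L_1\lambda = 0$ gives $d\mu_1(a_1) = d\iota_1\lambda = L_1\lambda - \iota_1 d\lambda = -\iota_1\omega = \nu(a_1)$, which is exactly the hamiltonian-pair condition; in particular $\mu_1(a_1) \in \Omega^{n-1}_{\mathrm{ham}}(M)$, so $\mu_1$ indeed lands in degree $0$.

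The heart of the argument is a general identity for $d(\iota_1\cdots\iota_k\lambda)$, which I would establish by induction on $k$: peel off the outermost factor using $d\iota_1 = L_1 - \iota_1 d$, then commute $L_1$ to the right through $\iota_2\cdots\iota_k$ by repeated use of $[L_1,\iota_j] = \iota_{[X_1,X_j]}$. Whenever $L_1$ reaches $\lambda$ it produces $L_1\lambda = 0$ and dies, while each commutator leaves behind a term in which some $\iota_j$ is replaced by $\iota_{\rho([a_1,a_j])}$; applying the inductive hypothesis to $d(\iota_2\cdots\iota_k\lambda)$ and invoking $L_i\lambda = 0$ throughout yields
\begin{equation*}
  d(\iota_1\cdots\iota_k\lambda)
  = (-1)^k \iota_1\cdots\iota_k\, d\lambda
    + \sum_{1 \le i < j \le k} (-1)^{i+j-1}\,
      \iota_{\rho([a_i,a_j])}\,
      \iota_1\cdots\hat{\iota}_i\cdots\hat{\iota}_j\cdots\iota_k\,\lambda
  \,.
\end{equation*}
The first term is precisely $\nu(a_1\wedge\cdots\wedge a_k)$, since $d\lambda = \omega$, and the sum equals $-\mu_{k-1}\delta(a_1\wedge\cdots\wedge a_k)$, because the Chevalley--Eilenberg differential contributes the term $(-1)^{i+j}[a_i,a_j]\wedge a_1\wedge\cdots\hat{a}_i\cdots\hat{a}_j\cdots\wedge a_k$ and $(-1)^{i+j-1} = -(-1)^{i+j}$. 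Hence $d\mu_k + \mu_{k-1}\delta = \nu$, as required, and since \eqref{eq:muNullHomotopy} is equivalent to $\mu$ being a morphism of $L_\infty$-algebras, this proves the proposition.

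The main obstacle is the sign and combinatorial bookkeeping in the induction, specifically reconciling the position at which $\iota_{\rho([a_i,a_j])}$ is inserted by the commutators (naturally in slot $i$) with the front position it occupies in $\mu_{k-1}\delta$. Moving $\iota_{\rho([a_i,a_j])}$ to the front costs a factor $(-1)^{i-1}$ by the anticommutativity of interior products, and one must check that this combines with the sign generated by the induction to produce exactly the $(-1)^{i+j}$ of $\delta$; the explicit cases $k=1,2$ computed above fix the conventions and serve as a reliable guide for the general step. Once the displayed identity is secured, no further input is needed.
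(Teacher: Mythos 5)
Your proof is correct. The paper itself gives no proof of Prop.~\ref{prop:ExactMomentum} (it is cited as well-known), but your direct verification of the null-homotopy condition $d\mu_k + \mu_{k-1}\delta = \nu$ via Cartan's formula, the identity $[\Lie_{X_i},\iota_{X_j}] = \iota_{[X_i,X_j]}$, and the invariance $\Lie_{\rho(a)}\lambda = 0$ is exactly the standard argument the author is implicitly invoking; your displayed identity for $d(\iota_1\cdots\iota_k\lambda)$ and the sign matching with the Chevalley--Eilenberg differential check out against the paper's conventions.
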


\begin{Notation}
For shorter notation we will write the $k$-bracket also as
\begin{equation*}
\begin{split}
  l_k(\alpha_1 \wedge \ldots \wedge \alpha_k)
  &\equiv l_k(\alpha_1, \ldots, \alpha_k)
  \\
  &\equiv \{\alpha_1, \ldots, \alpha_k\}
\end{split}
\end{equation*}
Analogously, we will write the momentum map as
\begin{equation*}
  \mu_k(a_1 \wedge \ldots \wedge a_k)
  \equiv \mu_k(a_1, \ldots, a_k)
  \,.
\end{equation*}
In this notation, Eq.~\eqref{eq:muHomLie} is written as
\begin{equation}
\label{eq:muBracket}
  \{ \mu_1(a_1), \mu_1(a_2) \}
  =
  \mu_1\bigl( [a_1, a_2] \bigr) 
  - d\mu_2(a_1, a_2)
  \,.
\end{equation}
\end{Notation}

\section{Multisymplectic geometry of lagrangian field theories}
\label{sec:LFT}

The space of fields of a field theory is the set of sections $\calF = \Gamma(M,F)$ of a fibre bundle over a manifold $M$, naturally equipped with the functional diffeology. The lagrangian is a map $\tilde{L}: \calF \to \Omega^n(M)$, where $n$ is the dimension of $M$. We will assume that the lagrangian is local, i.e.~a differential operator, so that $\tilde{L}(\phi) = (j^\infty \phi)^* L$, where $L \in \Omega^{0,n}(J^\infty F)$ is the lagrangian form and $j^\infty \phi: M \to J^\infty F$ is the infinite jet prolongation of the field $\phi: M \to F$. 

If $M$ is compact, we can define the action $S: \calF \to \bbR$ by $S(\phi) = \int_M \tilde{L}(\phi)$. Many interesting and important lorentzian spacetimes are not compact, however, so that the action is generally not well-defined. Therefore, we have to formulate the action principle, the derivation of the field equations,  the notion of symmetries, etc.~in a cohomological form within the variational bicomplex \cite{Zuckerman:1986,DeligneFreed:1999}.

In Sec.~\ref{sec:CohomAct} we state the action principle in its cohomological form, essentially replacing integration by taking cohomology classes with respect to the spacetime differential $d$. By the cohomological version of partial integration the variation of the lagrangian can be written as $\delta L = EL - d\gamma$, where $EL$ is the Euler-Lagrange form and the $\gamma$ the boundary form  \cite{Zuckerman:1986}. $EL$ can be viewed as the differential operator of the field equations, so that it governs the dynamics of the field theory. The integration of $\delta\gamma$ over a codimension 1 submanifold of spacetime yields a presymplectic form on the space of fields, so that it describes the Poisson brackets of the field observables.

For the multisymplectic approach we will consider $\omega = EL + \delta \gamma$, which is an exact $(n+1)$-form of degree $(n+1)$. Its primitive is the Lepage form $L + \gamma$. In Prop.~\ref{prop:HamCurr} we show that the bidegree $(0,n-1)$-component of a hamiltonian form is a conserved current in the sense of \cite{Zuckerman:1986}. The $L_\infty$-algebra associated to $\omega$ as in \cite{Rogers:2012} can therefore be viewed as a higher version of the current algebra.

If $M$ is closed, so that the action $S: \calF \to \bbR$ is defined, a symmetry of the LFT is an automorphism $\Phi$ of $\calF$ that leaves the action invariant, $\Phi^* S = S$. Infinitesimally, a vector field $\Xi$ on $\calF$ is a symmetry if $\Lie_\Xi S = 0$. This is the case if and only if $\Lie_\Xi \tilde{L}$ is $d$-exact, which we take as the general definition of a symmetry. For a local lagrangian we have to require that the vector field $\Xi$, too, is local. In Sec.~\ref{sec:NoetherSym} we observe that a vector field on the diffeological space $\calF$ is local if and only if it descends to a vector field on $J^\infty F$, which is the infinite prolongation of an evolutionary ``vector field''. Such vector fields are strictly vertical in the sense that their inner derivative commutes with the horizontal differential. The strictly horizontal vector fields whose inner derivative commutes with the vertical differential are the lifts of the spacetime vector fields by the Cartan connection. 

In Def.~\ref{def:ManifestDiffSym} we introduce the notion of manifest diffeomorphism symmetry, which is an action $\rho: \calX(M) \to \calX(J^\infty F)$ of the Lie algebra of spacetime vector fields on the infinite jet bundle, such that $\rho(v) = \xi_v + \hat{v}$ is the sum of a strictly vertical vector field $\xi_v$ and the Cartan lift of $v$ that leaves the Lepage form invariant, $\Lie_{\rho(v)} (L + \gamma) = 0$. We point out in Prop.~\ref{prop:ManifestMomMap} that such a symmetry has a homotopy momentum map given by inserting the fundamental vector fields of the action into the Lepage form, which is a special case of the well-known Prop.~\ref{prop:ExactMomentum}.

\subsection{The cohomological action principle}
\label{sec:CohomAct}

A variety of ingredients can play a constitutive role in the mathematical study of classical field theories. For the purpose of this paper the following minimal definition will suffice:

\begin{Definition}
\label{def:LFT}
A \textbf{local lagrangian field theory} (LFT) consists of a manifold $M$ of dimension $n$, called the spacetime, a fibre bundle $F \to M$, called the configuration bundle, and a form $L \in \Omega^{0,n}(J^\infty F)$ in the variational bicomplex, called the lagrangian. 
\end{Definition}

Let $\alpha \in \Omega^{p,q}(J^\infty F)$, where $p$ denotes the vertical and $q$ the horizontal degree. The vertical differential will be denoted by $\delta$, the horizontal differential by $d$. The form $\alpha$ is represented by a form on a finite dimensional jet manifold $J^k F$, which is given by a map $\tilde{\alpha}: J^k F \to \wedge^{p+q} T^* J^k F$. In this way $\alpha$ gives rise to a $k$-th order differential operator
\begin{align*}
  D_\alpha: \calF &\longrightarrow 
  \Gamma(M, \wedge^{p+q} T^* J^k F)
  \\
  \phi &\longmapsto \tilde{\alpha} \circ j^k \phi
  \,,
\end{align*}
where $j^k \phi: F \to J^k F$ is the $k$-th jet prolongation of $\phi$. In this notation, the integrand of the action can be written as $\tilde{L}(\phi) = D_L(\phi)$. 

The target of the differential operator $D_\alpha$ is not a vector space, so it does not make sense to consider the equation ``$D_\alpha(\phi) = 0$'', even though this is how the corresponding PDE is often written. And even if $F \to M$ is a vector bundle so that $\wedge^{p+q} T^* J^k F \to F$ is a vector bundle, the right hand side cannot be required to be the zero section, as this would imply that $\phi$ is the zero section of $F \to M$. Instead, we have to use that there is a zero form $0 \in \Omega^{p,q}(J^k F)$ in every bidegree. The PDE can then be written more carefully as $D_\alpha (\phi) = D_0(\phi)$. If this equation holds, we will say that $\alpha$ \textbf{vanishes at $\phi \in \calF$}.

\begin{Definition}
\label{def:CohActPrinc}
A form $\alpha \in \Omega^{p,q}(J^\infty F)$ is said to be \textbf{$d$-exact at $\phi \in \calF$} if there is a form $\beta \in \Omega^{p,q-1}(J^\infty F)$ such that $\alpha - d\beta$ vanishes at $\phi$.
\end{Definition}

\begin{Definition}
A field $\phi \in \calF$ is said to satisfy the \textbf{cohomological action principle} if $\delta L$ is $d$-exact at $\phi$.
\end{Definition}

If a form $\alpha$ is of top horizontal degree $q=n$, then there is a unique representative $P\alpha$ of its $d$-cohomology class, which has the following property: The form $\alpha$ is $d$-exact at $\phi$ if and only if $P\alpha$ vanishes at $\phi$. The map $P: \Omega^{p,n}(J^\infty F) \to \Omega^{p,n}(J^\infty F)$ is the cohomological version of partial integration and straightforward to compute. It is called the interior Euler operator. Forms in the image of $P$ are called ``source'' for $p=1$ and ``functional'' for $p > 1$ \cite{Takens:1981}, \cite[Def.~2.5 and Ch.~3]{Anderson:1989}.

The map $E := P\delta: \Omega^{p,n}(J^\infty F) \to \Omega^{p+1,n}(J^\infty F)$ is called the Euler operator. The source form $EL \in \Omega^{1,n}(J^\infty F)$ is called the \textbf{Euler-Lagrange form}. Since the interior Euler operator does not change the $d$-cohomology class, $EL = P\delta L$ and $\delta L$ are in the same $d$-cohomology class, i.e.
\begin{equation*}
  EL - \delta L = d\gamma \,,
\end{equation*}
for some $\gamma \in \Omega^{1,n-1}(J^\infty F)$. The form $\gamma$ is called a \textbf{boundary form}.

From the properties of source forms it follows that a field satisfies the cohomological action principle if and only if it satisfies the Euler-Lagrange equation
\begin{equation*}
  D_{EL}(\phi) = D_0(\phi)
  \,.
\end{equation*}
In physics terminology, such a field is called \textbf{on shell}.

\subsection{Premultisymplectic structure and current \texorpdfstring{$L_\infty$}{L-infinity}-algebra}
\label{sec:CurrentAlgebra}

Let $\gamma$ be a boundary form. The form
\begin{equation}
\label{eq:Lepage}
  \lambda := L + \gamma
\end{equation}
of total degree $n$ will be called the \textbf{Lepage form}\footnote{For the terminology see \cite{Krupka:1983} or \cite[p.~199]{Anderson:1989}. Deligne and Freed call $\lambda$ the ``total Lagrangian'' \cite[p.~161]{DeligneFreed:1999}.}. Let the total differential of $J^\infty F$ be denoted by $\Dtot = \delta + d$. The total differential of the Lepage form is
\begin{equation}
\label{eq:MultiSympForm}
\begin{split}
  \omega &:= \Dtot\lambda \\
  &= EL + \delta \gamma
  \,,
\end{split}
\end{equation}
which is the premultisymplectic structure we are interested in.

On $J^\infty F$ we have the splitting of vector fields into a vertical and horizontal component which leads to the bigrading on the de Rham complex. Moreover, we have the acyclicity theorem of the variational bicomplex. This leads to the following description of hamiltonian vector fields.


\begin{Proposition}
\label{prop:HamCurr}
Let $X$ be a vector field on $J^\infty F$ with vertical component $X^\perp$. Then $X$ is hamiltonian with respect to the premultisymplectic form $\omega = EL + \delta\gamma$ if and only if
\begin{itemize}

\item[(i)] $\Lie_{X} \omega = 0$, and

\item[(ii)] $\iota_{X^{\!\perp}} EL = dj$ for some $j \in \Omega^{0,n-1}(J^\infty F)$.

\end{itemize}
\end{Proposition}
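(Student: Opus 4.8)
The plan is to unwind what ``hamiltonian'' means in this setting. Since the de Rham differential on $J^\infty F$ is the total differential $\Dtot = \delta + d$, the pair $(X,\alpha)$ is hamiltonian precisely when $\iota_X \omega = -\Dtot \alpha$ for some $\alpha$ of total degree $n-1$. Everything then reduces to bookkeeping by bidegree. Writing $X = X^\perp + \bar X$ with $\bar X$ the horizontal component, and recalling that $\omega = EL + \delta\gamma$ with $EL \in \Omega^{1,n}$ and $\delta\gamma \in \Omega^{2,n-1}$, the contraction $\iota_X\omega$ is concentrated in the three bidegrees $(0,n)$, $(1,n-1)$, $(2,n-2)$, and its $(0,n)$-component is exactly $\iota_{X^\perp} EL$.

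For the necessity of (i) and (ii) I would argue directly. Condition (i) is immediate from Cartan's formula $\Lie_X \omega = \Dtot\iota_X\omega + \iota_X\Dtot\omega$ together with $\Dtot\omega = 0$ and $\iota_X\omega = -\Dtot\alpha$, which give $\Lie_X\omega = -\Dtot^2\alpha = 0$. Condition (ii) is then obtained by reading off the $(0,n)$-component of $\iota_X\omega = -\Dtot\alpha$: the left side is $\iota_{X^\perp} EL$ and the right side is $-d\alpha^{0,n-1}$, so $j := -\alpha^{0,n-1} \in \Omega^{0,n-1}$ satisfies $\iota_{X^\perp} EL = dj$.

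The substance is the converse. Assuming (i), Cartan's formula and $\Dtot\omega = 0$ show that $\beta := \iota_X\omega$ is $\Dtot$-closed, and I want to produce a primitive $\alpha = \sum_p \alpha^{p,n-1-p}$ of total degree $n-1$ with $\beta = -\Dtot\alpha$, built by descent in the vertical degree $p$ with horizontal exactness supplying each lift. The bottom step $p=0$ requires $\beta^{0,n} = \iota_{X^\perp} EL$ to be $d$-exact, which is exactly hypothesis (ii); I set $\alpha^{0,n-1} = -j$. For the inductive step at bidegree $(p,n-p)$ with $p \geq 1$ I consider
\[
  \eta \;:=\; \beta^{p,n-p} + \delta\alpha^{p-1,n-p},
\]
and compute $d\eta = d\beta^{p,n-p} + \delta\beta^{p-1,n-p+1}$, which vanishes because it is precisely the $(p,n-p+1)$-component of $\Dtot\beta = 0$ (using $d\delta = -\delta d$ and the matching relation already established one degree down). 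Since $\eta$ sits in horizontal degree $n-p < n$ with vertical degree $p \geq 1$, the acyclicity of the augmented variational bicomplex lets me write $\eta = -d\alpha^{p,n-1-p}$, which is the matching relation in bidegree $(p,n-p)$. Iterating, the components $\beta^{1,n-1}$ and $\beta^{2,n-2}$ are absorbed at steps $p=1,2$, while for $p \geq 3$ one has $\beta^{p,n-p} = 0$ and the higher components of $\alpha$ serve only to cancel the spurious $\delta$-tails. The descent terminates at horizontal degree $0$: at bidegree $(n,0)$ the leftover term $\delta\alpha^{n-1,0}$ is $d$-closed in $\Omega^{n,0}$, and left-edge exactness (injectivity of $d$ on $\Omega^{\bullet,0}$ in positive vertical degree) forces it to vanish, so $\beta = -\Dtot\alpha$ as required.

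The main obstacle I anticipate is organizing this descent cleanly: verifying at each stage that the $\Dtot$-closedness of $\beta$ delivers exactly the $d$-closedness needed to invoke row acyclicity, and handling the edge of the induction at horizontal degree $0$ so that no residual term in $\Omega^{\bullet,0}$ obstructs closing up $\alpha$. Once the acyclicity is pinned down in the precise form ``the augmented horizontal rows are exact in positive vertical degree,'' these remaining checks are routine sign-and-bidegree computations, and the conceptual content is simply that hypothesis (ii) removes the one obstruction that lives in the non-acyclic bottom row $p=0$.
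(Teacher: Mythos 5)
Your proof is correct and takes essentially the same route as the paper: the forward direction reads off the $(0,n)$-component of $\iota_X\omega = -\Dtot\alpha$ and applies Cartan's formula, while the converse is precisely the descent argument that the paper isolates as Lem.~\ref{lem:AcycCurrent}, resting on the acyclicity theorem for the variational bicomplex. The only difference is cosmetic: you inline that lemma and make explicit the terminal step at bidegree $(n,0)$ (injectivity of $d$ on $\Omega^{n,0}$), which the paper's induction leaves implicit.
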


In the proof, we will use the following lemma \cite[Thm.~11.1.6]{Delgado:PhD}.

\begin{Lemma}
\label{lem:AcycCurrent}
Let $\beta \in \Omega^n(J^\infty F)$ be a $\Dtot$-closed form and
\begin{equation*}
  \beta = \beta_0 + \ldots + \beta_n
\end{equation*}
its decomposition into summands of bidegree $\deg \beta_k = (k,n-k)$. Then $\beta$ is $\Dtot$-exact if and only if $\beta_0$ is $d$-exact.
\end{Lemma}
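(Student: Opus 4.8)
The statement is an equivalence, and one implication is immediate. If $\beta = \Dtot\eta$ for some $\eta \in \Omega^{n-1}(J^\infty F)$, decompose $\eta = \eta_0 + \ldots + \eta_{n-1}$ into bidegrees $\deg\eta_k = (k,n-1-k)$. Reading off the bidegree $(0,n)$ component of $\Dtot\eta = (\delta + d)\eta$, the only contribution is $d\eta_0$, so $\beta_0 = d\eta_0$ is $d$-exact. The substance of the lemma is the converse, for which the plan is a descent (zig-zag) argument up the columns of the bicomplex, powered by the acyclicity theorem of the variational bicomplex.

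First I would record what $\Dtot$-closedness of $\beta$ says componentwise. Collecting $(\delta + d)\beta = 0$ by bidegree yields the relations
\[
  \delta \beta_{p-1} + d\beta_p = 0, \qquad p = 1, \ldots, n,
\]
where we set $\beta_{-1} = 0$. Assuming $\beta_0 = d\eta_0$ is $d$-exact, I would construct $\eta = \eta_0 + \ldots + \eta_{n-1}$ so that $\Dtot\eta$ reproduces $\beta$ in all bidegrees up to a possible bottom remainder. Having fixed $\eta_0$, I would solve inductively
\[
  d\eta_p = \beta_p - \delta\eta_{p-1}, \qquad p = 1, \ldots, n-1.
\]
Each such equation is solvable by horizontal acyclicity once one checks that the right-hand side is $d$-closed: using that $d$ and $\delta$ anticommute, that $\delta^2 = 0$, and the inductive hypothesis $d\eta_{p-1} = \beta_{p-1} - \delta\eta_{p-2}$, one computes $d(\beta_p - \delta\eta_{p-1}) = d\beta_p + \delta\beta_{p-1} = 0$ by the closedness relation. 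Since $\beta_p - \delta\eta_{p-1}$ has bidegree $(p, n-p)$ with $p \geq 1$ and $1 \leq n-p \leq n-1$, it lies in the range where the horizontal rows are exact, so a primitive $\eta_p$ exists.

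The one delicate point, and the main obstacle, is the termination of the descent at the bottom row. By construction $\Dtot\eta$ agrees with $\beta$ in every bidegree $(p,n-p)$ with $p \leq n-1$, so the discrepancy
\[
  \rho := \beta - \Dtot\eta = \beta_n - \delta\eta_{n-1}
\]
lies entirely in bidegree $(n,0)$. Since $\beta$ is $\Dtot$-closed and $\Dtot\eta$ is $\Dtot$-exact, $\rho$ is $\Dtot$-closed; as its images $d\rho$ and $\delta\rho$ sit in distinct bidegrees, both vanish, so in particular $d\rho = 0$. But $\rho$ has vertical degree $n \geq 1$ and horizontal degree $0$, and at horizontal degree $0$ the horizontal complex has no cohomology in positive vertical degree, so a $d$-closed form in $\Omega^{n,0}(J^\infty F)$ must vanish. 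Hence $\rho = 0$ and $\beta = \Dtot\eta$ is $\Dtot$-exact. I expect the only steps requiring genuine care to be the sign bookkeeping in the closedness relations and the precise invocation of acyclicity at the bottom row.
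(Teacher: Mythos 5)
Your proof is correct and follows essentially the same route as the paper: read off the bidegree $(0,n)$ component for the easy direction, then run the zig-zag induction up the columns using the componentwise relations $\delta\beta_{p-1}+d\beta_p=0$ and the acyclicity theorem to produce the primitives. The only difference is that you make explicit the termination at bidegree $(n,0)$ (injectivity of $d$ on $\Omega^{n,0}$ killing the remainder), a step the paper's proof leaves implicit in its ``by induction''.
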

\begin{proof}
A form $\alpha \in \Omega^{n-1}(J^\infty F)$ can be decomposed as
\begin{equation*}
 \alpha 
 = \alpha_0 + \ldots + \alpha_{n-1}
 \,,
\end{equation*}
into components of bidegree $\deg \alpha_k = (k,n-1-k)$. The total differential decomposes as
\begin{equation*}
 \Dtot\alpha 
 = d\alpha_0 + (\delta\alpha_0 + d\alpha_1) + \ldots
 + (\delta\alpha_{n-2} + d\alpha_{n-1}) + \delta\alpha_{n-1}
 \,,
\end{equation*}
into summands of homogeneous bidegree, where the first summand has bidegree $(0,n)$ and the last $(n,0)$. Assume that $\beta = \Dtot \alpha$. This condition must hold in each bidegree individually. In particular we have $\beta_0 = d\alpha_0$.

Conversely, assume that $\beta_0 = d\alpha_0$ for some $\alpha_0 \in \Omega^{0,n-1}(J^\infty F)$. The total differential of $\beta$ decomposes as
\begin{equation*}
 \Dtot\beta
 = (\delta\beta_0 + d\beta_1) + \ldots
 + (\delta\beta_{n-1} + d\beta_{n}) + \delta\beta_{n}
\end{equation*}
into summands of homogeneous bidegree, where the first summand has bidegree $(1,n)$ and the last $(n+1,0)$. By assumption $\Dtot \beta = 0$, which has to hold in each bidegree separately,
\begin{align*}
  0 &= \delta\beta_0 + d\beta_1
  \\
  0 &= \delta\beta_1 + d\beta_2
  \\
  &{}\qquad\vdots
  \\
  0 &= \delta\beta_{n-1} + d\beta_n
  \\
  0 &= \delta\beta_n
  \,.
\end{align*}
From the first equation we get
\begin{equation*}
\begin{split}
  0 
  &= \delta \beta_0 + d \beta_1 = \delta d\alpha_0 + d\beta_1
  \\
  &= d(-\delta \alpha_0 + \beta_1) \,.
\end{split}
\end{equation*}
It follows from the acyclicity theorem for the variational bicomplex \cite[Thm.~4.6]{Takens:1981} that $-\delta\alpha_0 + \beta_1 = d\alpha_1$ for some $\alpha_1 \in \Omega^{1,n-2}(J^\infty F)$. The bidegree $(2,n-1)$ component of $\Dtot \beta = 0$ can now be written as
\begin{equation*}
\begin{split}
  0 
  &= \delta \beta_1 + d \beta_2 
  = \delta(\delta \alpha_0 + d\alpha_1) + d\beta_2
  \\
  &= d(-\delta \alpha_1 + \beta_2) \,.
\end{split}
\end{equation*}
As before, it follows from the acyclicity theorem that $\beta_2 = \delta\alpha_1 + d\alpha_2$ for some $\alpha_2 \in \Omega^{2,n-3}(J^\infty F)$. By induction, we obtain forms $\alpha_0, \ldots, \alpha_{n-1}$ such that $\Dtot \alpha = \beta$ for $\alpha =  \alpha_0 + \ldots + \alpha_{n-1}$.
\end{proof}

\begin{proof}[Proof of Prop.~\ref{prop:HamCurr}]
Let $X^\perp$ be the vertical and $X^\parallel$ the horizontal component of $X$. Assume that $\iota_X \omega = - \Dtot \alpha$. The left hand side decomposes as
\begin{equation*}
\begin{split}
  \iota_X \omega 
  &= (\iota_{X^{\!\perp}} + \iota_{X^\parallel})
     (EL + \delta \gamma)
  \\
  &=
  \iota_{X^{\!\perp}} EL 
  + \bigl(\iota_{X^\parallel} EL 
  + \iota_{X^{\!\perp}} \delta\gamma \bigr)
  + \iota_{X^\parallel} \delta \gamma
  \,,
\end{split}
\end{equation*}
into summands of bidegree $(0,n)$, $(1,n-1)$, and $(2,n-2)$. We conclude that the bidegree $(0,n)$ component of the hamiltonian condition is $\iota_{X^{\!\perp}} EL = - d\alpha_0$, which is condition (ii) for $\alpha_0 = -j$. Since $\omega$ is closed, we have $\Lie_X \omega = \Dtot \iota_X \omega = 0$ which is condition (i). 

Conversely, assume that (i) and (ii) hold. This means that $-\beta = \iota_X \omega$ is $\Dtot$-closed and that $\beta_0 = d\alpha_0$, where $\alpha_0 = -j$. It now follows from Lem.~\ref{lem:AcycCurrent} that $\iota_X \omega = -\beta = -\Dtot \alpha$.
\end{proof}

A form $j \in \Omega^{0,n-1}(J^\infty F)$ is also called a \textbf{current}. A current is called \textbf{conserved} if it is $d$-closed on shell, i.e.~if $dj$ vanishes at every solution of the Euler-Lagrange equation. Prop.~\ref{prop:HamCurr} shows that the degree $(0,n-1)$ component of a hamiltonian form is a conserved current. In this sense, $L_\infty(J^\infty F, \omega)$ can be viewed as higher current algebra.

The $k$-bracket of hamiltonian forms $\alpha_1, \ldots, \alpha_k$ is given by
\begin{equation*}
\begin{split}
  \{\alpha_1, \ldots, \alpha_k\}
  &= -(-1)^k 
    (\iota_{X_1^{\!\perp}} + \iota_{X_1^\parallel}) \cdots
    (\iota_{X_k^{\!\perp}} + \iota_{X_k^\parallel})
    (EL + \delta\gamma)
  \\ 
  &= 
  \sum_{1\leq i < j \leq k} (-1)^{k-i-j}
  (\iota_{X_1^\parallel} \cdots 
  \widehat{\iota_{X_i^\parallel}} \cdots 
  \widehat{\iota_{X_j^\parallel}} \cdots 
  \iota_{X_k^\parallel})
  (\iota_{X_i^{\!\perp}}\iota_{X_j^{\!\perp}}\delta\gamma)
  \\
  &{}\quad
  + \sum_{1\leq i \leq k} (-1)^{i-1}
  (\iota_{X_1^\parallel} \cdots 
  \widehat{\iota_{X_i^\parallel}} \cdots 
  \iota_{X_k^\parallel})
  (\iota_{X_i^{\!\perp}} EL)
  \\
  &{}\quad
  + \sum_{1\leq i \leq k} (-1)^{i-1}
  (\iota_{X_1^\parallel} \cdots 
  \widehat{\iota_{X_i^\parallel}} \cdots 
  \iota_{X_k^\parallel})
  (\iota_{X_i^{\!\perp}} \delta\gamma)
  -(-1)^k  
  (\iota_{X_1^\parallel} \cdots 
  \iota_{X_k^\parallel}) EL
  \\
  &{}\quad
  -(-1)^k 
  (\iota_{X_1^\parallel} \cdots 
  \iota_{X_k^\parallel}) \delta\gamma
  \,,
\end{split}
\end{equation*}
where $X_1, \ldots, X_k$ are the hamiltonian vector fields. The 2-bracket is given by
\begin{equation}
\label{eq:2bracket}
\begin{split}
  \{ \alpha, \beta \}
  &= (\iota_{Y^{\!\perp}} + \iota_{Y^\parallel})
     (\iota_{X^{\!\perp}} + \iota_{X^\parallel})
     (EL + \delta\gamma)
  \\
  &= \iota_{Y^{\!\perp}} \iota_{X^{\!\perp}} \delta\gamma
  + ( \iota_{Y^\parallel} \iota_{X^{\!\perp}} 
     -\iota_{X^\parallel} \iota_{Y^{\!\perp}} ) EL
  \\
  &{}\quad
  + (\iota_{Y^\parallel} \iota_{X^{\!\perp}} 
    -\iota_{X^\parallel} \iota_{Y^{\!\perp}}) \delta\gamma
  + \iota_{X^\parallel} \iota_{X^\parallel} EL
  \\
  &{}\quad
  + \iota_{Y^\parallel} \iota_{X^\parallel} \delta\gamma
  \,,
\end{split}    
\end{equation}
where $(X,\alpha)$ and $(Y,\beta)$ are hamiltonian pairs.

\subsection{Noether symmetries}
\label{sec:NoetherSym}

The diffeological tangent space of $\calF$ is given by the space of sections of the vertical tangent bundle, $T\calF \cong \Gamma(M,VF)$, so that a vector field on $\calF$ is given by a map $\Xi: \Gamma(M,F) = \calF \to T\calF = \Gamma(M,VF)$. This map is called local if it is a differential operator, i.e.~if there is a commutative diagram
\begin{equation*}
\begin{tikzcd}[column sep=large]
\Gamma(M,F) \times M \ar[d, "j^k"] \ar[r, "\Xi \times \id_M"] &
\Gamma(M,VF) \times M \ar[d, "j^0"]
\\
J^k F \ar[r, "\xi_0"] & VF
\end{tikzcd}
\end{equation*}
where $VF = \ker (TF \to TM)$ is the vertical vector bundle. The map $\xi_0$ is often called an evolutionary ``vector field''.

\begin{Remark}
\label{rmk:Evolutionary}
We put quotes around evolutionary ``vector field'' because it cannot be naturally viewed as an actual vector field unless the configuration bundle $F \to M$ is equipped with a flat connection. Readers who are used to this traditional (but abusive) terminology (e.g.~Def.~1.15 in \cite{Anderson:1989}) are kindly asked to ignore the quotes.
\end{Remark}

The map $\xi_0$ can be prolonged to a vector field $\xi$ on $J^\infty F$, which is the unique vector field such that 
\begin{equation}
\label{eq:VecFieldLift}
\begin{tikzcd}[column sep=large]
\Gamma(M,F) \times M \ar[d, "j^\infty"] \ar[r, "\Xi \times \id_M"] &
\Gamma(M,VF) \times M \ar[d, "Tj^\infty"]
\\
J^\infty F \ar[r, "\xi"] & TJ^\infty F
\end{tikzcd}
\end{equation}
commutes. If such a commutative diagram exists, we will say that $\xi \in \calX(J^\infty F)$ lifts to the vector field $\Xi \in \calX(\calF)$. The following proposition is a purely algebraic characterization of such vector fields.

\begin{Proposition}
\label{prop:StrictVert}
A vector field $\xi \in \calX(J^\infty F)$ lifts to a vector field on $\calF$ if and only if $[\iota_\xi, d] = 0$.
\end{Proposition}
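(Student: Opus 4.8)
The plan is to compute in adapted jet coordinates and reduce the statement to a recursion. Fix coordinates $x^i$ on $M$ and fibre coordinates $u^a$ on $F$, with induced jet coordinates $u^a_I$ on $J^\infty F$, where $I$ runs over symmetric multi-indices and $Ii$ denotes the index $I$ extended by $i$. Recall the total derivatives $D_i = \partial_{x^i} + u^a_{Ii}\,\partial_{u^a_I}$ and the contact forms $\theta^a_I = du^a_I - u^a_{Ii}\,dx^i$; the coordinate functions $x^i, u^a_I$ together with the one-forms $dx^i$ (bidegree $(0,1)$) and $\theta^a_I$ (bidegree $(1,0)$) generate $\Omega^{\bullet,\bullet}(J^\infty F)$, and one has $d\theta^a_I = -\,\theta^a_{Ii}\wedge dx^i$. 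Writing a general vector field as $\xi = a^i\,\partial_{x^i} + b^a_I\,\partial_{u^a_I}$, I would first unwind the diagram~\eqref{eq:VecFieldLift}: a variation $\phi_t$ with $\partial_t\phi^a|_{0} = Q^a$ induces $\partial_t\,(u^a_I \circ j^\infty\phi_t)|_{0} = D_I Q^a$, so the diagram commutes exactly when $\xi$ has no $\partial_{x^i}$-component and $b^a_I = D_I Q^a$, i.e.~when $\xi$ is the evolutionary prolongation of $\xi_0 = (Q^a)$.

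Next I would exploit that $[\iota_\xi, d]$, being the graded commutator of two odd derivations, is itself a derivation of bidegree $(0,0)$ and is therefore determined by its values on the generators $x^i,\ u^a_I,\ dx^i,\ \theta^a_I$. Testing on the function $x^j$ gives $[\iota_\xi,d]\,x^j = \iota_\xi\,dx^j = a^j$, so the vanishing of $[\iota_\xi,d]$ forces $a^j = 0$ and $\xi$ is vertical. For such a vertical $\xi$ the remaining computation is clean: using $\iota_\xi\theta^a_I = b^a_I$ and $d\theta^a_I = -\theta^a_{Ii}\wedge dx^i$ one finds
\[
  [\iota_\xi,d]\,\theta^a_I
  = \iota_\xi\,d\theta^a_I + d\,\iota_\xi\theta^a_I
  = \bigl( D_i b^a_I - b^a_{Ii}\bigr)\,dx^i
  \,,
\]
while the generators $u^a_I$ and $dx^j$ impose no further conditions once $a^j=0$. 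Hence $[\iota_\xi,d] = 0$ is equivalent to $a^j = 0$ together with the recursion $b^a_{Ii} = D_i b^a_I$ for all $I,i,a$.

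Finally I would solve the recursion: with base coefficients $b^a := b^a_\emptyset$ it forces $b^a_I = D_I b^a$, which is precisely the coordinate form of the evolutionary prolongation from the first step, now with $Q^a = b^a$. Thus $[\iota_\xi,d]=0$ holds if and only if $\xi$ is such a prolongation, which by the reading of~\eqref{eq:VecFieldLift} is exactly the condition that $\xi$ lift to a vector field on $\calF$. I expect the main obstacle to be purely computational: getting the contraction signs and the index shift $I \mapsto Ii$ correct in $[\iota_\xi,d]\theta^a_I$, and justifying cleanly that it suffices to evaluate the derivation $[\iota_\xi,d]$ on algebra generators and that the recursion reconstructs the full prolongation from its order-zero part.
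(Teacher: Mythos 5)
Your proposal is correct and is exactly the ``straightforward computation in jet coordinates'' that the paper's one-line proof alludes to without carrying out: you reduce $[\iota_\xi,d]=0$ to $a^j=0$ and the recursion $b^a_{Ii}=D_i b^a_I$ by evaluating the degree-zero derivation on generators, and identify the solutions with prolongations of evolutionary ``vector fields'', i.e.\ with the lifts characterized by diagram~\eqref{eq:VecFieldLift}. The only cosmetic point is that for general $\xi$ the commutator $[\iota_\xi,d]$ is homogeneous of total degree $0$ but mixes bidegrees $(0,0)$ and $(-1,1)$; this does not affect the argument, since being a derivation suffices for it to be determined on generators.
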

\begin{proof}
The proof follows from a straightforward computation in jet coordinates.
\end{proof}

The kernel of $\Omega^{1,0}(J^\infty F)$ defines an integrable distribution on $TJ^\infty F$, called the Cartan distribution, which can be interpreted as a flat Ehresmann connection on $TJ^\infty F \to M$. The horizontal lift of a vector field $v \in \calX(M)$ is denoted by $\hat{v} \in \calX(J^\infty F)$. Since the connection is flat, the map $\calX(M) \to \calX(J^\infty F)$, $v \mapsto \hat{v}$ is a homomorphism of Lie algebras. The following is a purely algebraic characterization of such lifts.

\begin{Proposition}
\label{prop:StrictHor}
A vector field $\xi$ on $J^\infty F$ is the horizontal lift of a vector field on $M$ by the Cartan connection if and only if $[\iota_{\xi}, \delta] = 0$.
\end{Proposition}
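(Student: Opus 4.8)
The plan is to prove Prop.~\ref{prop:StrictHor} by the same kind of coordinate computation that underlies its vertical counterpart Prop.~\ref{prop:StrictVert}. The essential observation is that the graded commutator $[\iota_\xi,\delta] = \iota_\xi\delta + \delta\iota_\xi$ of the two odd derivations $\iota_\xi$ (bidegree $(-1,0)$ on the relevant part) and $\delta$ is itself a derivation of bidegree $(0,0)$ on the algebra $\Omega^{\bullet,\bullet}(J^\infty F)$. Hence it vanishes identically if and only if it vanishes on a generating set, and in local jet coordinates $(x^i, u^a_I)$ the form algebra is generated by the functions together with the horizontal one-forms $dx^i$ and the contact one-forms $\theta^a_I = \delta u^a_I = du^a_I - u^a_{Ij}\,dx^j$. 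I would first record the action of the vertical differential on these generators: on a function $\delta f = (\partial f/\partial u^a_I)\,\theta^a_I$, while $\delta(dx^i)=0$ and $\delta\theta^a_I = \delta\delta u^a_I = 0$. This reduces everything to three short computations.

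For the forward implication, write the Cartan lift as $\hat v = v^i(x)\,D_i$, where the total derivatives $D_i$ span the Cartan distribution. Two facts then do all the work: since $\hat v$ lies in the Cartan distribution it annihilates every contact form, $\iota_{\hat v}\theta^a_I = 0$, and since the coefficients $v^i$ depend on $x$ alone one has $\delta v^i = 0$. Evaluating $[\iota_{\hat v},\delta]$ on each generator then gives zero: on a function $f$ one gets $(\partial f/\partial u^a_I)\,\iota_{\hat v}\theta^a_I = 0$; on $dx^i$ one gets $\delta(\iota_{\hat v}dx^i) = \delta v^i = 0$; and on $\theta^a_I$ both $\iota_{\hat v}\delta\theta^a_I$ and $\delta\iota_{\hat v}\theta^a_I$ vanish.

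For the converse, assume $[\iota_\xi,\delta]=0$ and decompose $\xi = \xi^i\partial_{x^i} + \sum_I \xi^a_I\,\partial_{u^a_I}$. Testing the hypothesis on an arbitrary function $f$ yields $(\partial f/\partial u^a_I)\,\iota_\xi\theta^a_I = 0$, and specializing to $f = u^b_J$ forces $\iota_\xi\theta^a_I = \xi^a_I - u^a_{Ij}\xi^j = 0$ for all $a,I$; this is precisely the statement that the vertical component $\xi^\perp$ vanishes, so that $\xi = \xi^i D_i$ lies in the Cartan distribution. Testing next on $dx^i$ gives $\delta(\iota_\xi dx^i) = \delta\xi^i = (\partial\xi^i/\partial u^a_I)\,\theta^a_I = 0$, so each coefficient $\xi^i$ is independent of all jet variables, i.e.\ $\xi^i = \xi^i(x)$. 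Hence $\xi = \xi^i(x)\,D_i = \hat v$ is the Cartan lift of $v = \xi^i\partial_{x^i}\in\calX(M)$.

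The one point requiring care, and the reason the statement is not merely a restatement of horizontality, is that the vanishing of $[\iota_\xi,\delta]$ on functions detects only that $\xi$ is horizontal; an arbitrary horizontal field $f^i(x,u)\,D_i$ with jet-dependent coefficients still satisfies that part. It is the evaluation on the horizontal generators $dx^i$ that supplies the extra constraint $\delta\xi^i=0$ forcing the coefficients to be basic, thereby singling out genuine Cartan lifts. I expect no serious obstacle beyond keeping the bidegree bookkeeping and the graded signs straight; the argument is the exact horizontal–vertical dual of Prop.~\ref{prop:StrictVert}.
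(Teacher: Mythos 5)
Your proposal is correct and takes the same approach the paper indicates: the paper's proof is only the remark that the statement ``follows from a straightforward computation in jet coordinates,'' and your argument carries out exactly that computation, organized via the observation that the degree-zero derivation $[\iota_\xi,\delta]$ vanishes iff it vanishes on the generators $f$, $dx^i$, $\theta^a_I$. The computation is sound, including the key point that testing on $dx^i$ is what forces the coefficients $\xi^i$ to be basic rather than merely horizontal.
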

\begin{proof}
The proof follows from a straightforward computation in jet coordinates.
\end{proof}

The last two propositions can be understood geometrically as follows. Assume for the sake of argument that $\calF$ is a finite dimensional manifold. The de Rham complex of $\Omega(\calF \times M)$ has a bigrading with vertical differential $\delta$ in the direction of $\calF$ and horizontal differential in the direction of $M$. A vector field $\xi$ on $\calF \times M$ is the lift of a vector field on $\calF$ if and only $[\iota_\xi, d] = 0$ and a lift of a vector field on $M$ if and only if $[\iota_\xi, \delta] = 0$. Props.~\ref{prop:StrictVert} and \ref{prop:StrictHor} show that this characterization is valid also in the variational bicomplex. In order to emphasize this geometric interpretation, we will use for the purpose of this paper the following terminology:

\begin{Definition}
\label{def:StrictlyVertHor}
A vector field $\xi$ on $J^\infty F$ will be called \textbf{strictly vertical} if $[\iota_\xi, d] = 0$ and \textbf{strictly horizontal} if $[\iota_\xi, \delta] = 0$.
\end{Definition}

The Lie derivatives of a strictly vertical vector field $\xi$ and of a strictly horizontal vector field $\hat{v}$ are given by
\begin{equation*}
  \Lie_\xi = [\iota_\xi, \delta]
  \,,\qquad
  \Lie_{\hat{v}} = [\iota_{\hat{v}}, d]
  \,.
\end{equation*}

\begin{Definition}
A strictly vertical vector field $\xi$ such that $\Lie_\xi L = d\beta$ for some $\beta \in \Omega^{0,n-1}(J^\infty F)$ will be called a \textbf{Noether symmetry} of the LFT.
\end{Definition}

\begin{Remark}
\label{rmk:NoetherTerminology}
Vector fields on the infinite jet bundle are sometimes called ``generalized vector fields'' and symmetries given by such vector fields ``generalized symmetries'' (e.g.~in \cite{DeligneFreed:1999}). However, an analysis of Noether's historic paper shows that this is Noether's original notion of symmetry, which was only to be rediscovered later \cite[Sec.~7.1]{Kosmann-Schwarzbach:Noether}.
\end{Remark}

Recall that a form $j \in \Omega^{0,n-1}(J^\infty F)$ is also called a \textbf{current}. If there is a strictly vertical vector field $\xi$ such that
\begin{equation*}
    dj = \iota_\xi EL \,,
\end{equation*}
then $j$ is called a \textbf{Noether current} and $(\xi,j)$ a \textbf{Noether pair} \cite[Def.~2.97]{DeligneFreed:1999}. Noether currents are conserved. Noether's first theorem states that if $\xi$ is a Noether symmetry, then
\begin{equation*}
  j := \beta - \iota_\xi \gamma
\end{equation*}
is a Noether current. The proof is a half-line calculation,
\begin{equation*}
  dj 
  = d\beta - d\iota_\xi \gamma
  = \iota_\xi \delta L + \iota_\xi d \gamma
  = \iota_\xi EL
  \,,
\end{equation*}
which highlights the advantage of working in the variational bicomplex.

\subsection{Manifest diffeomorphism symmetries}
\label{sec:Manifest}

In \cite[p.~169]{DeligneFreed:1999}, a \textbf{manifest symmetry} was defined to be a vector field $X \in \calX(J^\infty F)$ such that:
\begin{itemize}

\item[(i)] $X = \xi + \hat{v}$ is the sum of a strictly vertical vector field $\xi$ and a strictly horizontal vector field $\hat{v}$.

\item[(ii)] $\Lie_{\xi + \hat{v}} (L + \gamma) = 0$.

\end{itemize}
This suggests the following terminology:

\begin{Definition}
\label{def:ManifestDiffSym}
Let $(M,F,L)$ be a LFT with boundary form $\gamma$. An action
\begin{align*}
  \rho: \calX(M)
  &\longrightarrow \calX(J^\infty F)
  \\
  v &\longmapsto \rho(v) := \xi_v + \hat{v}
  \,.
\end{align*}
by manifest symmetries will be called a \textbf{manifest diffeomorphism symmetry}. 
\end{Definition}

\begin{Remark}
The Cartan lift $v \mapsto \hat{v}$ of vector fields on $M$ is a homomorphism of Lie algebras. Since strictly vertical and strictly horizontal vector fields commute, it follows that the map $v \mapsto \xi_v$ is a homomorphism of Lie algebras, too.
\end{Remark}

\begin{Remark}
If $F \to M$ is a natural bundle, i.e.~diffeomorphisms between open subsets of $M$ lift functorially to diffeomorphisms between local sections, then it follows from \cite{EpsteinThurston:1997} that we have an action of vector fields on $J^\infty F$. The diffeomorphism symmetries of LFTs often arise in this way.
\end{Remark}

\begin{Proposition}
\label{prop:ManifestMomMap}
Let $(M,F,L)$ be an LFT with boundary form $\gamma$. Then every manifest diffeomorphism symmetry $\rho: \calX(M) \to \calX(J^\infty F)$ has a homotopy momentum map 
\begin{equation*}
  \mu: \calX(M) \longrightarrow 
  L_\infty(J^\infty F, EL + \delta\gamma)
  \,.
\end{equation*}
given by 
\begin{equation*}
  \mu_k(v_1, \ldots, v_k) 
  := \iota_{\rho(v_1)} \cdots \iota_{\rho(v_k)} (L + \gamma)
  \,.  
\end{equation*}
\end{Proposition}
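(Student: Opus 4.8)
The plan is to recognize Prop.~\ref{prop:ManifestMomMap} as a direct instance of the abstract Prop.~\ref{prop:ExactMomentum}, so that essentially no new work is required beyond verifying the three hypotheses of that proposition in the present setting. Recall that Prop.~\ref{prop:ExactMomentum} takes as input an exact $(n+1)$-form $\omega = d\lambda$ (here the ambient differential is the total differential $\Dtot$), an action $\rho$ of a \emph{Lie algebra} by vector fields, and the invariance condition $\Lie_{\rho(v)}\lambda = 0$; its conclusion is precisely that $\mu_k = \iota_{\rho(v_1)}\cdots\iota_{\rho(v_k)}\lambda$ defines a homotopy momentum map. So the entire content of the proof is a matching-up of notation.

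First I would set $\lambda := L + \gamma$, the Lepage form, and recall from Eq.~\eqref{eq:MultiSympForm} that $\Dtot\lambda = EL + \delta\gamma = \omega$; thus $\omega$ is $\Dtot$-exact with primitive $\lambda$, supplying the first hypothesis. Second, I would invoke the Remark following Def.~\ref{def:ManifestDiffSym}: since the Cartan lift $v \mapsto \hat v$ is a Lie algebra homomorphism and strictly vertical vector fields commute with strictly horizontal ones, the full assignment $v \mapsto \rho(v) = \xi_v + \hat v$ is a homomorphism of Lie algebras $\calX(M) \to \calX(J^\infty F)$. This supplies the second hypothesis, that $\rho$ is a genuine action. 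Third, the invariance condition $\Lie_{\rho(v)}\lambda = 0$ is exactly condition (ii) in the definition of a manifest symmetry, namely $\Lie_{\xi_v + \hat v}(L+\gamma) = 0$, which holds for every $v$ by the definition of a manifest diffeomorphism symmetry.

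With all three hypotheses verified, I would simply apply Prop.~\ref{prop:ExactMomentum} with $\frakg = \calX(M)$ and $\lambda = L + \gamma$, and read off the conclusion: the maps
\begin{equation*}
  \mu_k(v_1, \ldots, v_k) = \iota_{\rho(v_1)} \cdots \iota_{\rho(v_k)}(L + \gamma)
\end{equation*}
assemble into a homotopy momentum map $\mu \colon \calX(M) \to L_\infty(J^\infty F,\, EL + \delta\gamma)$, which is the assertion.

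I do not expect any genuine obstacle here, since the statement is explicitly flagged in the text as \emph{``a special case of the well-known Prop.~\ref{prop:ExactMomentum}''}. The only point requiring a moment of care is a bookkeeping check that the grading and sign conventions used to state Prop.~\ref{prop:ExactMomentum} are the ones appropriate to the total complex $(\Omega^\bullet(J^\infty F), \Dtot)$ with its total degree, rather than to either row or column of the bicomplex separately; in particular one wants $\lambda$ to sit in total degree $n$ and $\omega$ in total degree $n+1$, which matches the setup of Sec.~\ref{sec:LFT}. Once that is observed, the verification of the hypotheses is immediate and the conclusion follows formally.
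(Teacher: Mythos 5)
Your proposal is correct and follows exactly the paper's route: the paper's proof is the single sentence ``This is a special case of Prop.~\ref{prop:ExactMomentum},'' and you have simply spelled out the (straightforward) verification of its three hypotheses. No discrepancy to report.
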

\begin{proof}
This is a special case of Prop.~\ref{prop:ExactMomentum}.
\end{proof}

The homotopy momentum map of a single vector field is split into a bidegree $(0,n-1)$ and a bidegree $(1,n-2)$ summand as
\begin{equation}
\label{eq:mu1}
\begin{split}
  \mu_1(v) 
  &= (\iota_{\xi_v} + \iota_{\hat{v}})(L + \gamma)
  = (\iota_{\hat{v}} L + \iota_{\xi_v} \gamma) 
  + \iota_{\hat{v}} \gamma
  \\
  &= - j_v + \iota_{\hat{v}} \gamma
  \,,
\end{split}
\end{equation}
where
\begin{equation}
\label{eq:NoetherCurrLFT}
  j_v = - \iota_{\hat{v}} L - \iota_{\xi_v} \gamma 
\end{equation}
is the Noether current of $\xi_v$. In general, the map $\mu_k$ splits into a $(0,n-k)$ and a $(1,n-k-1)$ component given by the two lines of the right hand side of the equation
\begin{equation*}
\begin{split}
  \mu_k(v_1, \ldots, v_k)
  &= - \sum_{i = 1}^k (-1)^{k-i} 
     (\iota_{\hat{v}_1} \cdots \widehat{\iota_{\hat{v}_i}}
     \cdots \iota_{\hat{v}_k} ) j_{v_i}
  +  (1-k)(\iota_{\hat{v}_1} \cdots \iota_{\hat{v}_k}) L  
  \\
  &{}\quad
  + (\iota_{\hat{v}_1} \cdots \iota_{\hat{v}_k}) \gamma
  \,.
\end{split}    
\end{equation*}
For example, we have
\begin{equation*}
 \mu_2(v, w)
 = (\iota_{\hat{v}} j_{w} - \iota_{\hat{w}} j_{v}
 + \iota_{\hat{v}}\iota_{\hat{w}} L)
 + \iota_{\hat{v}} \iota_{\hat{w}} \gamma
\end{equation*}
Using Eq.~\eqref{eq:2bracket}, we can write the $l_2$-bracket of the momenta as
\begin{equation}
\label{eq:TwoBracketSplit}
\begin{split}
  \{ \mu_1(v), \mu_1(w) \}
  &= \iota_{\xi_{w}} \iota_{\xi_{v}} \delta\gamma
  + ( \iota_{\hat{w}} \iota_{\xi_{v}} 
     -\iota_{\hat{v}} \iota_{\xi_{w}} ) EL
  \\
  &{}\quad
  + (\iota_{\hat{w}} \iota_{\xi_v} 
    -\iota_{\hat{v}} \iota_{\xi_w}) \delta\gamma
  + \iota_{\hat{w}} \iota_{\hat{v}} EL
  \\
  &{}\quad
  + \iota_{\hat{w}} \iota_{\hat{v}} \delta\gamma
  \,,
\end{split}    
\end{equation}
where the three lines of the right hand side are of bidegrees $(0,n-1)$, $(1,n-2)$, and $(2,n-3)$. The right hand side of Eq.~\eqref{eq:muBracket} is expressed in terms of the Noether current as
\begin{equation*}
\begin{split}
  \mu_1([v,w]) - \Dtot \mu_2(v,w)
  &= - j_{[v,w]} 
     + d( \iota_{\hat{v}} j_{w} - \iota_{\hat{w}} j_{v}
         +\iota_{\hat{v}}\iota_{\hat{w}} L)
  \\
  &{}\quad
  + \iota_{\widehat{[v,w]}} \gamma
  - \delta( \iota_{\hat{v}} j_{w} - \iota_{\hat{w}} j_{v}
           +\iota_{\hat{v}}\iota_{\hat{w}} L)
  - d \iota_{\hat{v}} \iota_{\hat{w}} \gamma
  \\
  &{}\quad
  + \iota_{\hat{w}} \iota_{\hat{v}} \delta\gamma
  \,.
\end{split}    
\end{equation*}

\begin{Remark}
\label{rmk:IntegralBracket}
If we integrate $\mu_1(a)$ over a closed codimension 1 submanifold $\Sigma \subset M$, we see from Eq.~\eqref{eq:mu1} that we obtain, up to a sign, the usual Noether charge $\int_\Sigma \mu_1(v) = - \int_\Sigma j_v$. This is no longer true for the brackets. The integral $\int_\Sigma \iota_{\xi_{w}} \iota_{\xi_{v}} \delta\gamma$ of the first summand on the right hand side of Eq.~\eqref{eq:TwoBracketSplit} is the usual bracket of charges. The integral of the second summand, however, is an additional contribution, which is not present in the multimomentum map of \cite[Sec.~4.1]{BridgesEtAl:2010}. The integrals of all other terms on the right hand side of Eq.~\eqref{eq:TwoBracketSplit} vanish for degree reasons.
\end{Remark}

\begin{Example}[Classical mechanics]
In classical mechanics spacetime is time $M=\bbR$ and the configuration bundle is trivial, $F = \bbR \times Q \to \bbR$, so that $\calF = C^\infty(\bbR, Q)$ is the space of smooth paths in $Q$. Let us consider the lagrangian of a particle of mass 1 in a potential $V$,
\begin{equation*}
  L =  \bigl( \tfrac{1}{2} \dot{q}^i \dot{q}^i - V(q) \bigr)dt
  \,.
\end{equation*}
Here $t, q^i, \dot{q}^i, \ddot{q}^i, \ldots$ are coordinates on the infinite jet bundle, given by
\begin{equation*}
  \dot{q}^i(j^\infty_0 x) = \frac{dx^i}{dt}\Bigr|_{t=0}
\end{equation*}
for a path $x: \bbR \to Q$. Using the relations $d \delta q^i = -\delta\dot{q}^i \wedge dt$, $dq^i = \dot{q}^i dt$, and $d \dot{q}^i = \ddot{q}^i dt$, we find that $\delta L = EL - d\gamma$ with
\begin{align*}
  EL &= - \Bigl( \ddot{q}^i
  + \frac{\partial V}{\partial q^i} \Bigr) \delta q^i \wedge dt
  \\
  \gamma &= \dot{q}^i \delta q^i
  \,.
\end{align*}
For the presymplectic form $\omega$ we obtain
\begin{equation*}
  \omega = 
  - \Bigl( \ddot{q}^i
  + \frac{\partial V}{\partial q^i} \Bigr) \delta q^i \wedge dt
  + \delta\dot{q}^i \wedge \delta q^i
  \,,
\end{equation*}
which is a form on $J^2 (\bbR \times Q)$.
The Cartan lift of the infinitesimal generator of time translation, i.e~of the coordinate vector field $\partial_t \equiv \frac{\partial}{\partial t} \in \calX(\bbR)$ is
\begin{equation*}
  \hat{\partial}_t
  = \frac{\partial}{\partial t} 
  + \dot{q}^i\frac{\partial}{\partial q^i}
  + \ddot{q}^i\frac{\partial}{\partial \dot{q}^i}
  + \ldots
\end{equation*}
The time translation $x(\tau) \mapsto x(\tau - t)$ of paths descends to the strictly vertical vector field
\begin{equation*}
  \xi_{\partial_t}
  =  
  - \dot{q}^i\frac{\partial}{\partial q^i}
  - \ddot{q}^i\frac{\partial}{\partial \dot{q}^i}
  - \ldots 
\end{equation*}
The fundamental vector field of the diagonal action of time translation on $J^\infty F$ is therefore given by
\begin{equation}
\label{eq:QM1}
  \rho(\partial_t) = \xi_{\partial_t} + \hat{\partial}_t
  = \frac{\partial}{\partial t}
  \,.
\end{equation}
This equation looks like a tautology, but the vector field $\frac{\partial}{\partial t}$ on the right hand side is not horizontal and must not be identified with the vector field in the time direction. Moreover, $\rho$ is not $C^\infty(M)$-linear.

Eq.~\eqref{eq:QM1} implies that $\Lie_{\rho(\partial_t)} (L + \gamma) = 0$, so that time translation is a manifest symmetry. The corresponding momentum map is given by
\begin{equation*}
\begin{split}
  \mu_1(\partial_t) = - j_{\partial_t}
  \,,
\end{split}    
\end{equation*}
since for degree reasons the term $\iota_{\hat{\partial}_t} \gamma$ vanishes. The Noether momentum
\begin{equation*}
  j_{\partial_t} 
  = - \iota_{\hat{\partial}_t} L - \iota_{\xi_{\partial_t}} \gamma
  = \tfrac{1}{2} \dot{q}^i \dot{q}^i + V(q)
\end{equation*}
is the energy.
\end{Example}

\section{The variational bicomplex of lorentzian metrics}
\label{sec:VarBicomplex}

We turn to general relativity. Here, the fields are lorentzian metrics on the spacetime manifold $M$. Vector fields on $M$ act on metrics by the Lie derivative. This action is local, so that it descends to the infinite jet bundle, inducing an action on the variational bicomplex. In order to study this action, we introduce in Def.~\ref{def:CovariantForms} the concept of covariant and contravariant families of forms in the variational bicomplex, which generalizes the concept of tensor fields. In Sec.~\ref{sec:CovDerForms} we generalize the notion of covariant derivative to such families of forms. In Sec.~\ref{sec:VolumeForm} we derive divergence formulas that express the horizontal differential of a form in terms of the covariant derivative and the metric volume form. While the computations are similar to those with tensor fields, there are also differences. For example, the metric volume form is invariant (Lem.~\ref{lem:VolInvariant}), rather than transforming as a density.

\subsection{The action of spacetime vector fields}

Assume that $M$ is a manifold of finite dimension $n$. The configuration bundle of general relativity is the bundle of fibre-wise lorentzian metrics on the tangent spaces of the spacetime manifold $M$, which we denote by $\Lor \to M$. We use the ``east coast'' sign convention in which the signature of the metric is $(-1,1, \ldots, 1)$. The diffeological space of lorentzian metrics on $M$ will be denoted by $\calLor$.

\begin{Remark}
In many papers on LFTs and the variational bicomplex one of the the following simplifying assumptions about the configuration bundle $F \to M$ is made: $F$ is a vector bundle; the fibres of $F$ are connected; the space of sections $\calF = \Gamma(M,F)$ is non-empty; the jet evaluations $j^k: \calF \times M \to J^k F$ are surjective. All these assumptions generally fail for the bundle of lorentzian metrics.
\end{Remark}

The configuration bundle is natural, which means that local diffeomorphisms on $M$ lift functorially to the sheaf of sections. In particular, we have a left action of the diffeomorphism group $\Diff(M)$ on the space of fields $\calLor$ by pushforward. Infinitesimally, we have a left action of the Lie algebra of vector fields,
\begin{equation*}
\begin{aligned}
  \Xi: \calX(M) &\longrightarrow \calX(\calLor)
  \\
  v &\longmapsto (\Xi_v: \eta \mapsto -\Lie_v \eta)
  \,,
\end{aligned}
\end{equation*}
where the symmetric 2-form $-\Lie_v \eta$ represents a tangent vector in $T_\eta \calLor$. This action is local, so that it descends to an action of $\calX(M)$ on $J^\infty \Lor$ by strictly vertical vector fields,
\begin{equation*}
\begin{aligned}
  \xi: \calX(M) &\longrightarrow \calX(J^\infty \Lor)
  \\
  v &\longmapsto \xi_v
  \,.
\end{aligned}
\end{equation*}
Together with the Cartan lift of the vector field in $\calX(M)$, we obtain a homomorphism of Lie algebras
\begin{equation}
\label{eq:DiffActionGR}
\begin{aligned}
  \rho: \calX(M)
  &\longrightarrow \calX(J^\infty \Lor)
  \\
  v &\longmapsto \rho(v) := \xi_v + \hat{v}
  \,.
\end{aligned}
\end{equation}
Our ultimate goal is to show that $\rho$ is a manifest symmetry of general relativity for a natural choice of boundary form. In this section we will gather the necessary tools.

\subsection{Jet coordinates}

Let $(x^1, \ldots, x^n)$ be a system of local spacetime coordinates on an open subset $U \subset M$. The coordinate vector fields will be denoted by $\partial_a = \frac{\partial}{\partial x^a}$, the coordinate 1-forms by $dx^a$. A lorentzian metric $\eta \in \calLor$ is written in local coordinates as $\eta = \tfrac{1}{2} \eta_{ab} dx^a \wedge dx^b$, where $\eta_{ab} = \iota_{\partial_b} \iota_{\partial_a} \eta \in C^\infty(M)$ are the matrix components of the metric. (Recall that we use the Einstein summation convention throughout the paper.)

The local coordinates on $M$ induce local jet coordinates given by
\begin{equation*}
\begin{aligned}
  g_{ab,c_1 \cdots c_k}: 
  J^\infty \Lor 
  &\longrightarrow \bbR
  \\
  j^\infty_x \eta 
  &\longmapsto \frac{\partial^k \eta_{ab}}{
  \partial x^{c_1}  \cdots \partial x^{c_k} } \Bigr|_x 
  \,.  
\end{aligned}
\end{equation*}
Since the partial derivatives commute, $g_{ab,c_1 \cdots c_k}$ is invariant under permutations of the indices $c_1, \ldots, c_k$. To avoid overcounting in summation formulas it is convenient to use the multi-index notation of multi-variable analysis: A multi-index is a tuple $C = (C_1, \ldots, C_n)$ of natural numbers $C_k \geq 0$. The number $|C| = C_1 + \ldots + C_n$ is called the length of the index.  The concatenation of a multi-index with a single index is given by
\begin{equation*}
  Cd = (C_1, \ldots, C_d +1, \ldots, C_n)
  \,.
\end{equation*}
The jet coordinate function labeled by a multi-index is given by
\begin{equation*}
  g_{ab,C}(j^\infty_x \eta) 
  = \frac{\partial^{|C|} \eta_{ab}}{
  (\partial x^1)^{C_1}  \cdots (\partial x^n)^{C_n} } \Bigr|_x 
  \,.
\end{equation*}
The collection of functions $\{ x^a, g_{ab, C} \}$ for $1 \leq a \leq b \leq n$ and $C \in \bbN_0^n$ is a system of local coordinates on $J^\infty \Lor$.

\begin{Remark}
\label{rmk:PhysicsJetNotation}
In the physics literature, the same notation is usually used for both the jet coordinates and their evaluation on a field, which can be confusing. For example, if $M$ is non-compact, every $n$-form is exact, in particular the integrand $L(\eta)$ of the action. So for the step ``discarding exact terms'' during the derivation of the Euler-Lagrange equation to be meaningful, we must view the integrand as an element $L \in \Omega^{0,n}(J^\infty \Lor)$, i.e.~as an expression of the jet coordinates like $g_{ab,c}$ and not of the derivatives $\frac{\partial \eta_{ab}}{\partial x^c}$ of a particular metric $\eta$.
\end{Remark}

The variational bicomplex is generated as bigraded algebra by the coordinate functions, the vertical coordinate 1-forms $\delta g_{ab,C}$ in degree $(1,0)$, and the horizontal coordinate 1-forms $d x^a$ in degree $(0,1)$. A $(p,q)$-form is given in local coordinates by
\begin{equation*}
  \omega = \omega^{a_1, b_1, \ldots, a_p, b_p, C_1,\ldots, C_p}_{e_1, \ldots, e_q} 
  \delta g_{a_1 b_1, C_1} \wedge \ldots \wedge 
  \delta g_{a_p b_p, C_p}
  \wedge dx^{e_1} \wedge \ldots \wedge dx^{e_q}
  \,,
\end{equation*}
where the coefficients are functions on $J^\infty \Lor$. The other differentials of the jet coordinates are given by \cite[p.~18]{Anderson:1989}
\begin{equation*}
\begin{aligned}
  \delta x^a &= 0
  \\
  d g_{ab,C} 
  &= g_{ab,Ce}\, dx^e
  \,.
\end{aligned}
\end{equation*}
It follows that the differentials of the coordinate 1-forms are given by $d dx^a = 0$, $\delta \delta g_{ab,C} = 0$, $\delta dx^a = 0$, and \begin{equation*}
   d\delta g_{ab,C} = 
   - \delta g_{ab,Ce} \wedge dx^e
   \,.
\end{equation*}

Dually, the $C^\infty(J^\infty \Lor)$-module of vertical vector fields is spanned by the coordinate vector fields $\frac{\partial}{\partial g_{ab,C}}$, which satisfy
\begin{equation*}
\begin{aligned}
  \iota_{\frac{\partial}{\partial g_{ab,C}}} \delta g_{a'b',C'}
  &= \delta^a_{a'} \delta^b_{b'} \delta^C_{C'}
  \\
  \iota_{\frac{\partial}{\partial g_{ab,C}}} dx^e
  &= 0
  \,.
\end{aligned}
\end{equation*}
The module of horizontal vector fields, called the Cartan distribution, is spanned by the vector fields
\begin{equation*}
  \hat{\partial}_a 
  = \frac{\partial}{\partial x^a}
  + \sum_{|D| = 0}^\infty g_{bc,Da}
  \frac{\partial}{\partial g_{bc,D}}
  \,,
\end{equation*}
which satisfy
\begin{equation*}
\begin{aligned}
  \iota_{\hat{\partial}_a} \delta g_{bc,D} 
  &= 0
  \\
  \iota_{\hat{\partial}_a} dx^{a'} 
  &= \delta^{a'}_a 
  \,.  
\end{aligned}
\end{equation*}

The Cartan distribution can be viewed as an Ehresmann connection on the bundle $J^\infty \Lor \to M$. The horizontal lift of a vector field $v = v^a(x) \frac{\partial}{\partial x^a}$ on $M$ to $J^\infty \Lor$ is given by
\begin{equation*}
  \hat{v} = v^a(x) \hat{\partial}_a
  \,.
\end{equation*}
The vertical and horizontal differentials of a function $f \in C^\infty(J^\infty \Lor)$ are given by \cite[pp.~18-19]{Anderson:1989}
\begin{equation*}
\begin{aligned}
  \delta f &= \sum_{|C| = 0}^\infty
  \frac{\partial f}{\partial g_{ab,C}}  \delta g_{ab,C}
  \\
  d f &= (\hat{\partial}_a f) dx^a
  \,.
\end{aligned}    
\end{equation*}
The horizontal differential of a form $\omega \in \Omega^{p,q}(J^\infty \Lor)$ is given in local coordinates by
\begin{equation}
\label{eq:dGeneralForm}
  d\omega = (-1)^{p+q} (\Lie_{\hat{\partial}_a} \omega) \wedge dx^a 
  \,.
\end{equation}

A vector field is strictly horizontal if and only if it is the horizontal lift $\hat{v}$ of a vector field $v$ on $M$ by the Cartan connection. A vector field $\xi$ is strictly vertical if and only if it is the infinite prolongation of an evolutionary ``vector field'', i.e.~of a map $\xi_0: J^\infty \Lor \to V\Lor$ of bundles over $\Lor$, where $V\Lor \subset T\Lor$ is the vertical tangent bundle. In local coordinates it is of the form
\begin{equation}
\label{eq:ProlongEvol}
  \xi = \sum_{|C|=0}^\infty (\hat{\partial}_C \xi_{ab})
  \frac{\partial}{\partial g_{ab,C}}
  \,,
\end{equation}
where the $\xi_{ab}$ are functions on $J^\infty \Lor$ and where $\hat{\partial}_C = (\hat{\partial}_1)^{C_1} \cdots (\hat{\partial}_n)^{C_n}$ is the multi-index notation for the iterated application of the horizontal lifts of the coordinate vector fields.

\subsection{Action of spacetime vector fields on infinite jets}

The action of a vector field $v \in \calX(M)$ on a lorentzian metric $\eta \in \calLor$ by the negative Lie derivative, $\eta \mapsto -\Lie_v \eta$, is given in local coordinates by
\begin{equation*}
\begin{split}
  \eta_{ab} dx^a dx^b
  \longmapsto
  - \Bigl( v^c \frac{\partial \eta_{ab}}{\partial x^c}
  + \frac{\partial v^{a'}}{\partial x^a} \eta_{a'b}
  + \frac{\partial v^{b'}}{\partial x^b} \eta_{ab'}  
  \Bigr) dx^a dx^b
  \,.
\end{split}
\end{equation*}
We can view this as transformation of the coordinate functions
\begin{equation}
\label{eq:xivEvol}
  g_{ab}
  \longmapsto
  - \Bigl( v^c g_{ab,c} 
  + \frac{\partial v^{a'}}{\partial x^a} g_{a'b}
  + \frac{\partial v^{b'}}{\partial x^b} g_{ab'}
  \Bigr)
  =: \xi_{ab}
  \,,
\end{equation}
which are the components of the evolutionary ``vector field'' $\xi_{ab} \frac{\partial}{\partial g_{ab}}$. Its infinite prolongation is the strictly vertical vector field
\begin{equation*}
  \xi_v 
  =  \sum_{|C| = 0}^\infty (\hat{\partial}_C\xi_{ab})
     \frac{\partial}{\partial g_{ab,C}}
  \,,
\end{equation*}
which defines the action~\eqref{eq:DiffActionGR} of vector fields on the infinite jet bundle.

\subsection{Covariant and contravariant families of forms}

The Lie derivative of a coordinate function with respect to a strictly horizontal vector field is given by
\begin{equation*}
  \Lie_{\hat{v}} g_{ab,C}
  = \iota_{v^e\hat{\partial}_e} d g_{ab,C} \\
  = v^e g_{ab,Ce} \,.    
\end{equation*}
In particular, we have
\begin{equation*}
  \Lie_{\hat{\partial}_e} g_{ab}
  = g_{ab,e}
  \,.    
\end{equation*}
Note that this is the Lie derivative of a single function $g_{ab} \in C^\infty(J^\infty \Lor)$ and must not be confused with the Lie derivative of a metric 2-form on $M$. The formula~\eqref{eq:xivEvol} for the 0-jet component $\xi_{ab}$ of $\xi_v$ can now be written as
\begin{equation*}
  \Lie_{\xi_v} g_{ab}
  = - \Lie_{\hat{v}} g_{ab} 
  - \frac{\partial v^{a'}}{\partial x^a} g_{a'b}
  - \frac{\partial v^{b'}}{\partial x^b} g_{ab'}
  \,.
\end{equation*}
This can be expressed in terms of the diagonal action $\rho$ as
\begin{equation}
\label{eq:Lieg}
  \Lie_{\rho(v)} g_{ab}
  = 
  - \frac{\partial v^{a'}}{\partial x^a} g_{a'b}
  - \frac{\partial v^{b'}}{\partial x^b} g_{ab'}
  \,.
\end{equation}
Since $\delta$ commutes with both $\Lie_{\xi_v}$ and $\Lie_{\hat{v}}$, it commutes with $\Lie_{\rho(v)}$. This implies that
\begin{equation}
\label{eq:Liedeltag}
  \Lie_{\rho(v)} \delta g_{ab}
  =
  - \frac{\partial v^{a'}}{\partial x^a} \delta g_{a'b}
  - \frac{\partial v^{b'}}{\partial x^b} \delta g_{ab'}
  \,.
\end{equation}
Using $g^{ab}g_{bc} = \delta^a_c$, we get
\begin{equation}
\label{eq:Lieginv}
  \Lie_{\rho(v)} g^{ab}
  =  
    \frac{\partial v^a}{\partial x^{a'}} g^{a'b}
  + \frac{\partial v^b}{\partial x^{b'}} g^{ab'}
  \,.
\end{equation}
These calculations suggest the following definition.

\begin{Definition}
\label{def:CovariantForms}
A family of forms $\chi_{a_1 \cdots a_p}^{b_1 \cdots b_q} \in \Omega(J^\infty \Lor)$, $1 \leq a_1, \ldots, b_q \leq n$ is called \textbf{covariant} in $a_1, \ldots, a_p$ and \textbf{contravariant} in $b_1, \ldots, b_q$ if
\begin{equation*}
  \Lie_{\rho(v)} \chi_{a_1 \cdots a_p}^{b_1 \cdots b_q}
  = 
  - \sum_{i=1}^p
  \frac{\partial v^{a'_i}}{\partial x^{a_i}}\, 
  \chi_{a_1 \cdots a'_i \cdots  a_p}^{b_1 \cdots b_q}
  + \sum_{i=1}^q
  \frac{\partial v^{b_i}}{\partial x^{b'_i}}\, 
  \chi_{a_1 \cdots a_p}^{b_1 \cdots  b'_i \cdots  b_q}
  \,.  
\end{equation*}
A form $\chi \in \Omega(J^\infty \Lor)$ is called \textbf{invariant} if $\Lie_{\rho(v)} \chi = 0$.
\end{Definition}

Def.~\ref{def:CovariantForms} generalizes the notion of covariant and contravariant tensors to families of forms in $\Omega(J^\infty \Lor)$. In this terminology Eqs.~\eqref{eq:Lieg}, \eqref{eq:Liedeltag}, and \eqref{eq:Lieginv} show that the indices of $g_{ab}$ and $\delta g_{ab}$ are covariant, while those of $g^{ab}$ and $\delta g^{ab}$ are contravariant. Covariant and contravariant families behave in many ways as tensors.

\begin{Lemma}
\label{lem:WedgeCovariant}
Let $\chi_a$ be a covariant and $\psi^b$ a contravariant family of forms. Then the family $\chi_a \wedge \psi^b$ is covariant in $a$ and contravariant in $b$.
\end{Lemma}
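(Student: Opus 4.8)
The plan is to verify the defining identity of Def.~\ref{def:CovariantForms} directly for the family $\chi_a \wedge \psi^b$. Since $\chi_a$ is covariant in the single index $a$ and $\psi^b$ is contravariant in the single index $b$, the hypotheses read
\begin{equation*}
  \Lie_{\rho(v)} \chi_a
  = - \frac{\partial v^{a'}}{\partial x^a}\, \chi_{a'}
  \,,\qquad
  \Lie_{\rho(v)} \psi^b
  = \frac{\partial v^b}{\partial x^{b'}}\, \psi^{b'}
  \,,
\end{equation*}
and I must show that
\begin{equation*}
  \Lie_{\rho(v)} (\chi_a \wedge \psi^b)
  = - \frac{\partial v^{a'}}{\partial x^a}\, \chi_{a'} \wedge \psi^b
  + \frac{\partial v^b}{\partial x^{b'}}\, \chi_a \wedge \psi^{b'}
  \,,
\end{equation*}
which is exactly the $(p,q)=(1,1)$ case of the definition for the family $\chi_a \wedge \psi^b$.

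First I would recall that the Lie derivative $\Lie_{\rho(v)}$ is a degree-zero derivation of the wedge product on $\Omega(J^\infty \Lor)$, so it obeys the Leibniz rule
\begin{equation*}
  \Lie_{\rho(v)} (\chi_a \wedge \psi^b)
  = (\Lie_{\rho(v)} \chi_a) \wedge \psi^b
  + \chi_a \wedge (\Lie_{\rho(v)} \psi^b)
  \,.
\end{equation*}
Then I would substitute the two hypotheses into the right-hand side. The coefficient functions $\partial v^{a'}/\partial x^a$ and $\partial v^b/\partial x^{b'}$ are pulled back from $M$ and are scalars with respect to the wedge product, so they commute past the forms and factor out, yielding precisely the two terms of the claimed identity. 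This completes the verification.

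There is essentially no obstacle here: the proof is a one-line application of the Leibniz rule for the Lie derivative together with the fact that the transformation coefficients are functions, hence central in the graded-commutative algebra $\Omega(J^\infty \Lor)$. The only point demanding a moment of care is that the Lie derivative $\Lie_{\rho(v)}$ is genuinely a derivation of the full wedge algebra even though $\rho(v) = \xi_v + \hat{v}$ mixes a strictly vertical and a strictly horizontal component; this follows because the ordinary Cartan formula $\Lie_{\rho(v)} = [\iota_{\rho(v)}, \Dtot]$ always produces a derivation on the de Rham complex. I would remark that the argument is purely formal in the single-index case and extends verbatim, via the general Leibniz rule, to families with arbitrary numbers of covariant and contravariant indices, which is why this lemma serves as the prototype for the tensorial calculus of families of forms developed in the subsequent results.
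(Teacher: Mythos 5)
Your proof is correct and follows exactly the paper's argument: the paper likewise deduces the lemma immediately from the fact that $\Lie_{\rho(v)}$ is a degree-zero derivation of $\Omega(J^\infty \Lor)$. Your expanded justification (Leibniz rule plus centrality of the scalar coefficients) simply spells out the same one-line idea.
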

\begin{proof}
This follows immediately from the fact that $\Lie_{\rho(v)}$ is a degree 0 derivation of the algebra $\Omega(J^\infty \Lor)$.
\end{proof}

\begin{Lemma}
\label{lem:ContractionInvariant}
Let $\chi_a^b$ be a family of forms that is covariant in $a$ and contravariant in $b$, then the contracted form $\chi_a^a$ (summation over $a$) is invariant.
\end{Lemma}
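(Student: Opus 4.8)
The plan is to exploit the two facts that make the contraction collapse: first, that $\Lie_{\rho(v)}$ is a degree-$0$ derivation of $\Omega(J^\infty \Lor)$, hence a linear operator that commutes with the finite sum $\chi_a^a = \sum_{a=1}^n \chi_a^a$ defining the contraction; and second, that the covariant transformation law supplies exactly one ``index-rotation'' term for the lower index and the contravariant law exactly one for the upper index, with opposite signs. Contracting the two indices should make these two terms cancel after a relabeling of dummy summation indices.

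Concretely, I would first apply linearity to write $\Lie_{\rho(v)} \chi_a^a = \sum_a \Lie_{\rho(v)} \chi_a^a$, and then invoke Def.~\ref{def:CovariantForms} in the case of a single covariant and a single contravariant index,
\begin{equation*}
  \Lie_{\rho(v)} \chi_a^b
  = - \frac{\partial v^{a'}}{\partial x^a}\, \chi_{a'}^b
    + \frac{\partial v^b}{\partial x^{b'}}\, \chi_a^{b'}
  \,,
\end{equation*}
setting $b=a$ and summing over $a$. This yields
\begin{equation*}
  \Lie_{\rho(v)} \chi_a^a
  = - \frac{\partial v^{a'}}{\partial x^a}\, \chi_{a'}^a
    + \frac{\partial v^a}{\partial x^{b'}}\, \chi_a^{b'}
  \,,
\end{equation*}
where in the first summand both $a$ and $a'$ are summed, and in the second both $a$ and $b'$ are summed.

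The decisive step is then purely formal: in the first summand I would rename the dummy indices ($a'\mapsto a$ and $a\mapsto b'$), which turns $-\frac{\partial v^{a'}}{\partial x^a}\,\chi_{a'}^a$ into $-\frac{\partial v^a}{\partial x^{b'}}\,\chi_a^{b'}$, exactly the negative of the second summand. Hence the two terms cancel and $\Lie_{\rho(v)} \chi_a^a = 0$, which is the claimed invariance. I do not expect any real obstacle here: the only thing to watch is correct index bookkeeping, namely that the derivative factor $\partial v^{\bullet}/\partial x^{\bullet}$ and the remaining component of $\chi$ are relabeled consistently, and that the two signs are indeed opposite. Since the summation runs over the finite range $1 \le a \le n$, interchanging $\Lie_{\rho(v)}$ with the sum is unproblematic, and the argument reduces to the observation that the upper- and lower-index transformation terms are mirror images of one another once the repeated index is contracted.
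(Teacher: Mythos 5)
Your proof is correct: setting $b=a$, summing, and relabeling the dummy indices makes the covariant and contravariant transformation terms cancel exactly, which is the whole content of the statement. The paper omits the proof as immediate, and your argument is precisely the intended one.
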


The last two lemmas generalize in an obvious way to families with several indices. An immediate consequence of Lem.~\ref{lem:WedgeCovariant} and Lem.~\ref{lem:ContractionInvariant} is that we can raise and lower indices with the metric coordinate functions in the usual way: If $\chi_a$ is covariant, then $\chi^a = g^{aa'}\chi_{a'}$ is contravariant. If $\chi^a$ is contravariant, then $\chi_a = g_{aa'}\chi^{a'}$ is covariant.

\begin{Lemma}
\label{lem:InnnerDerConvariant}
If the family $\chi_b \in \Omega(J^\infty \Lor)$ is covariant, then the family $\iota_{\hat{\partial}_a} \chi_b$ is covariant in $a$ and $b$. 
\end{Lemma}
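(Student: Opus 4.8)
The plan is to reduce everything to the commutator identity $[\Lie_X, \iota_Y] = \iota_{[X,Y]}$, valid for any two vector fields on $J^\infty \Lor$, applied to $X = \rho(v)$ and $Y = \hat{\partial}_a$. This gives
\begin{equation*}
  \Lie_{\rho(v)} \bigl( \iota_{\hat{\partial}_a} \chi_b \bigr)
  = \iota_{\hat{\partial}_a} \Lie_{\rho(v)} \chi_b
  + \iota_{[\rho(v), \hat{\partial}_a]} \chi_b
  \,,
\end{equation*}
so that the lemma follows once the commutator $[\rho(v), \hat{\partial}_a]$ is known and the covariance of $\chi_b$ is fed into the first term.

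First I would compute $[\rho(v), \hat{\partial}_a]$ by splitting $\rho(v) = \xi_v + \hat{v}$. For the Cartan-lift part, flatness of the Cartan connection gives $[\hat{\partial}_c, \hat{\partial}_a] = 0$, and since $\hat{v} = v^c \hat{\partial}_c$ with $v^c = v^c(x)$ depending on the base coordinates only, the Leibniz rule for brackets yields
\begin{equation*}
  [\hat{v}, \hat{\partial}_a]
  = - (\hat{\partial}_a v^c)\, \hat{\partial}_c
  = - \frac{\partial v^c}{\partial x^a}\, \hat{\partial}_c
  \,,
\end{equation*}
using $\hat{\partial}_a v^c = \partial_{x^a} v^c$. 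For the strictly vertical part I would invoke the fact that the prolongation $\xi_v$ of an evolutionary vector field, written in the form~\eqref{eq:ProlongEvol}, commutes with the total derivatives $\hat{\partial}_a$, i.e.~$[\xi_v, \hat{\partial}_a] = 0$; this is checked by a direct computation in jet coordinates (it suffices to test the bracket on the coordinate functions $x^a$ and $g_{bc,C}$). Hence $[\rho(v), \hat{\partial}_a] = - \frac{\partial v^{a'}}{\partial x^a}\, \hat{\partial}_{a'}$.

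Finally I would substitute this together with the covariance relation $\Lie_{\rho(v)} \chi_b = - \frac{\partial v^{b'}}{\partial x^b}\chi_{b'}$ into the displayed identity; since the inner derivative is $C^\infty(J^\infty \Lor)$-linear and $\frac{\partial v^{b'}}{\partial x^b}$ is the pullback of a function on $M$, it passes through $\iota_{\hat{\partial}_a}$ unchanged, giving
\begin{equation*}
  \Lie_{\rho(v)} \bigl( \iota_{\hat{\partial}_a} \chi_b \bigr)
  = - \frac{\partial v^{a'}}{\partial x^a}\, \iota_{\hat{\partial}_{a'}} \chi_b
    - \frac{\partial v^{b'}}{\partial x^b}\, \iota_{\hat{\partial}_a} \chi_{b'}
  \,,
\end{equation*}
which is precisely the covariance condition in the two lower indices $a$ and $b$. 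The only nonformal step, and hence the main obstacle, is the commutator identity $[\xi_v, \hat{\partial}_a] = 0$; everything else is bookkeeping with the Cartan formula and the $C^\infty$-linearity of $\iota$. A conceptual alternative to the jet-coordinate check is to note that $\rho(v)$ generates the prolongation of the flow of $v$, which preserves the Cartan distribution, so that $\Lie_{\rho(v)} \hat{w} = \widehat{[v,w]}$ for every vector field $w$ on $M$; specializing to $w = \partial_a$ reproduces $[\rho(v), \hat{\partial}_a] = -\frac{\partial v^c}{\partial x^a}\hat{\partial}_c$ directly.
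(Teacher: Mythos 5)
Your proposal is correct and follows essentially the same route as the paper: both rest on the identity $[\Lie_X,\iota_Y]=\iota_{[X,Y]}$ together with $[\hat{v},\hat{\partial}_a]=-\frac{\partial v^{a'}}{\partial x^a}\hat{\partial}_{a'}$ and the vanishing of $[\xi_v,\hat{\partial}_a]$. The only difference is that you make the commutation of the prolonged evolutionary vector field $\xi_v$ with $\hat{\partial}_a$ explicit, whereas the paper uses it tacitly when writing $\Lie_{\xi_v}(\iota_{\hat{\partial}_a}\chi_b)=\iota_{\hat{\partial}_a}\Lie_{\xi_v}\chi_b$.
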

\begin{proof}
Let $\psi_{ab} = \iota_{\hat{\partial}_a} \chi_b$
We have
\begin{equation*}
\begin{split}
  \Lie_{\rho(v)} \psi_{ab}
  &=
  \Lie_{\xi_v + \hat{v}} (\iota_{\hat{\partial}_a} \chi_b)
  \\
  &= \bigl(\iota_{\hat{\partial}_a} \Lie_{\xi_v}
  + \iota_{\hat{\partial}_a}\Lie_{\hat{v}}
  + \iota_{[\hat{v}, \hat{\partial}_a]} \bigr) \chi_b
  \\
  &= \iota_{\hat{\partial}_a} \Lie_{\rho(v)} \chi_b
  - \frac{\partial v^{a'}}{\partial x^a} (\iota_{\hat{\partial}_{a'}}\chi_b)
  \\
  &= 
  - \frac{\partial v^{b'}}{\partial x^b} \psi_{ab'}
  - \frac{\partial v^{a'}}{\partial x^a} \psi_{a'b}
  \,,
\end{split}
\end{equation*}
which shows that $\psi_{ab}$ is covariant in $a$ and $b$.
\end{proof}

\begin{Lemma}
If the family $\chi_a \in \Omega(J^\infty \Lor)$ is covariant, then the family $\delta\chi_b$ is covariant. 
\end{Lemma}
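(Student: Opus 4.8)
The plan is to reduce the claim to two facts already in place in the excerpt: that $\Lie_{\rho(v)}$ commutes with the vertical differential $\delta$, and that the transformation coefficients $\partial v^{a'}/\partial x^a$ occurring in the covariance condition are pulled back from the base $M$ and are therefore annihilated by $\delta$. Once these are isolated, the statement becomes a one-line computation; the single index is a dummy label, so it suffices to treat the family $\chi_a$ and show that $\delta\chi_a$ satisfies the same transformation law.

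First I would write down the covariance hypothesis for $\chi_a$, namely
\begin{equation*}
  \Lie_{\rho(v)} \chi_a
  = - \frac{\partial v^{a'}}{\partial x^a}\, \chi_{a'}
  \,,
\end{equation*}
and apply the vertical differential $\delta$ to both sides. On the left, using that $\delta$ commutes with $\Lie_{\rho(v)}$ (established just before Eq.~\eqref{eq:Liedeltag}, since $\delta$ commutes with both $\Lie_{\xi_v}$ and $\Lie_{\hat v}$), I can move $\delta$ past the Lie derivative to obtain $\Lie_{\rho(v)}(\delta\chi_a)$.

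Next I would evaluate $\delta$ of the right-hand side by the Leibniz rule. The coefficient $\partial v^{a'}/\partial x^a$ is a function on $M$ pulled back to $J^\infty \Lor$, i.e.\ it depends only on the base coordinates $x^a$ and not on the fibre coordinates $g_{ab,C}$; because $\delta x^a = 0$, its vertical differential vanishes. Hence the Leibniz term carrying $\delta\bigl(\partial v^{a'}/\partial x^a\bigr)$ drops out, and what remains is
\begin{equation*}
  \Lie_{\rho(v)}(\delta\chi_a)
  = - \frac{\partial v^{a'}}{\partial x^a}\, \delta\chi_{a'}
  \,,
\end{equation*}
which is precisely the covariance condition of Def.~\ref{def:CovariantForms} for the family $\delta\chi_a$.

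I do not anticipate a genuine obstacle; the only point demanding care is the observation that the structure functions $\partial v^{a'}/\partial x^a$ are base functions and hence $\delta$-closed, so that $\delta$ threads through the covariance relation without producing extra terms. This is the abstract counterpart of the passage from Eq.~\eqref{eq:Lieg} to Eq.~\eqref{eq:Liedeltag}, where exactly the same mechanism was used to transport covariance of $g_{ab}$ to $\delta g_{ab}$.
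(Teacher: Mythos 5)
Your proposal is correct and follows essentially the same route as the paper's proof: commute $\delta$ past $\Lie_{\rho(v)}$ and use that the coefficients $\partial v^{a'}/\partial x^a$ are pulled back from the base, hence $\delta$-closed. The paper's version is just a three-line version of the same computation.
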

\begin{proof}
We have
\begin{equation*}
\begin{split}
  \Lie_{\rho(v)} \delta\chi_a
  &=
  \delta \Lie_{\rho(v)} \chi_a
  \\
  &=
  \delta \Bigl( 
  - \frac{\partial v^{a'}}{\partial x^a} \chi_{a'} \Bigr)
  \\
  &=
  - \frac{\partial v^{a'}}{\partial x^a} \delta\chi_{a'}
  \,,
\end{split}
\end{equation*}
which shows that $\chi_a$ is covariant.
\end{proof}

The last lemma generalizes in an obvious way to families of forms with covariant and contravariant indices. The analogous statement for the horizontal differential works only for invariant forms:

\begin{Lemma}
If $\chi \in \Omega(J^\infty \Lor)$ is invariant, then $d\chi$ is invariant. 
\end{Lemma}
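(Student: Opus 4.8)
The plan is to reduce the statement to the single fact that the Lie derivative $\Lie_{\rho(v)}$ commutes with the horizontal differential $d$. Granting this, if $\chi$ is invariant then
$\Lie_{\rho(v)} d\chi = d\, \Lie_{\rho(v)} \chi = 0$
for every $v \in \calX(M)$, which is precisely the invariance of $d\chi$. So the whole content of the lemma is the commutation relation $[\Lie_{\rho(v)}, d] = 0$.

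To establish that relation I would argue indirectly, avoiding any coordinate computation. First, $\Lie_{\rho(v)}$ commutes with the total de Rham differential $\Dtot = \delta + d$; this is the general identity $[\Lie_Y, \Dtot] = 0$, valid for any vector field $Y$ since $\Lie_Y = [\iota_Y, \Dtot]$ and $\Dtot^2 = 0$. Second, $\Lie_{\rho(v)}$ commutes with the vertical differential $\delta$; this was already observed just before Def.~\ref{def:CovariantForms}, where it was used to pass from \eqref{eq:Lieg} to \eqref{eq:Liedeltag}, and it rests on the strictness identities $[\iota_{\xi_v}, d] = 0$ and $[\iota_{\hat v}, \delta] = 0$. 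Since $d = \Dtot - \delta$, commutation with $\Dtot$ together with commutation with $\delta$ forces commutation with $d$, which is what is needed.

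Alternatively, I could verify $[\Lie_{\rho(v)}, d] = 0$ summand by summand on $\rho(v) = \xi_v + \hat v$, using the identity $[\Lie_Y, d] = [\Dtot, [\iota_Y, d]_+]$ that holds for any $Y$. For $Y = \xi_v$ the inner term $[\iota_{\xi_v}, d]_+$ vanishes by strict verticality, so the commutator is zero; for $Y = \hat v$ one has $[\iota_{\hat v}, d]_+ = \Lie_{\hat v}$ by strict horizontality, and the commutator reduces to $[\Dtot, \Lie_{\hat v}] = 0$. Either route is short.

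There is no genuine obstacle here; the only point worth flagging is the contrast the text draws, namely why the statement is asserted for invariant forms rather than for covariant families. If one ran the same argument on a covariant family $\chi_a$, commutation of $\Lie_{\rho(v)}$ with $d$ would give $\Lie_{\rho(v)} d\chi_a = d\bigl( -\tfrac{\partial v^{a'}}{\partial x^a} \chi_{a'} \bigr)$, and here $d$ also hits the transformation coefficient $\partial v^{a'}/\partial x^a$, producing a term in $d\bigl(\partial v^{a'}/\partial x^a\bigr)$ with second derivatives of $v$ that cannot be absorbed into a covariant transformation law. For an invariant $\chi$ there is no such coefficient, so this obstruction never arises and the conclusion goes through.
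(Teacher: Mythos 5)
Your proof is correct and follows the same route as the paper, which simply states that $d$ commutes with $\Lie_{\rho(v)}$ and concludes $\Lie_{\rho(v)}d\chi = d\,\Lie_{\rho(v)}\chi = 0$. The extra work you do to justify the commutation relation $[\Lie_{\rho(v)},d]=0$ (via $[\Lie_{\rho(v)},\Dtot]=0$ and $[\Lie_{\rho(v)},\delta]=0$, or summand by summand using strict verticality and horizontality) is sound and merely fills in a step the paper takes for granted.
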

\begin{proof}
The differential $d$ commutes with $\Lie_{\rho(v)}$, so that $\Lie_{\rho(v)} d\chi = d \Lie_{\rho(v)} \chi = 0$.
\end{proof}

\begin{Lemma}
\label{lem:LieDerConvariant}
If the form $\chi \in \Omega(J^\infty \Lor)$ is invariant, then the family $\Lie_{\hat{\partial}_a} \chi$ is covariant.
\end{Lemma}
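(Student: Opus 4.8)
The plan is to verify the defining transformation law of a covariant family directly, i.e.\ to show that $\psi_a := \Lie_{\hat{\partial}_a}\chi$ satisfies
\begin{equation*}
  \Lie_{\rho(v)}\psi_a = -\frac{\partial v^{a'}}{\partial x^a}\,\psi_{a'}
\end{equation*}
for every $v \in \calX(M)$. The engine is the commutator identity $[\Lie_X,\Lie_Y] = \Lie_{[X,Y]}$ for Lie derivatives of forms, together with the hypothesis that $\chi$ is invariant. Applying it to $X = \rho(v)$ and $Y = \hat{\partial}_a$ gives
\begin{equation*}
  \Lie_{\rho(v)}\Lie_{\hat{\partial}_a}\chi
  = \Lie_{\hat{\partial}_a}\Lie_{\rho(v)}\chi
  + \Lie_{[\rho(v),\hat{\partial}_a]}\chi
  = \Lie_{[\rho(v),\hat{\partial}_a]}\chi
  \,,
\end{equation*}
since $\Lie_{\rho(v)}\chi = 0$. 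Everything is thereby reduced to computing the bracket $[\rho(v),\hat{\partial}_a]$.

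Next I would compute this bracket by splitting $\rho(v) = \xi_v + \hat{v}$. The strictly vertical field $\xi_v$ is the prolongation of an evolutionary field and hence commutes with the total derivative $\hat{\partial}_a$; equivalently, strictly vertical and strictly horizontal fields commute, as recorded in the Remark following Def.~\ref{def:ManifestDiffSym}. Thus $[\xi_v,\hat{\partial}_a] = 0$. For the horizontal part, the Cartan lift $v \mapsto \hat{v}$ is a homomorphism of Lie algebras, so $[\hat{v},\hat{\partial}_a] = \widehat{[v,\partial_a]} = -\frac{\partial v^{a'}}{\partial x^a}\hat{\partial}_{a'}$, using $[v,\partial_a] = -\frac{\partial v^{a'}}{\partial x^a}\partial_{a'}$ on $M$. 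Hence $[\rho(v),\hat{\partial}_a] = -\frac{\partial v^{a'}}{\partial x^a}\hat{\partial}_{a'}$, and the identity above becomes $\Lie_{\rho(v)}\psi_a = \Lie_{-\frac{\partial v^{a'}}{\partial x^a}\hat{\partial}_{a'}}\chi$.

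The main obstacle is the final step: passing from $\Lie_{-\frac{\partial v^{a'}}{\partial x^a}\hat{\partial}_{a'}}\chi$ to $-\frac{\partial v^{a'}}{\partial x^a}\Lie_{\hat{\partial}_{a'}}\chi = -\frac{\partial v^{a'}}{\partial x^a}\psi_{a'}$. One cannot simply pull the coefficient out of the Lie derivative. In contrast to the inner derivative, which is $C^\infty(M)$-linear in its vector-field argument (the clean fact exploited in the proof of Lem.~\ref{lem:InnnerDerConvariant}), the Lie derivative obeys $\Lie_{fY} = f\Lie_Y + \Dtot f \wedge \iota_Y$, and $\Dtot f = df$ for the function $f = -\frac{\partial v^{a'}}{\partial x^a}$ on $M$. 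This produces a correction $-\sum_{a'} d\!\left(\frac{\partial v^{a'}}{\partial x^a}\right) \wedge \iota_{\hat{\partial}_{a'}}\chi$ involving second derivatives of $v$, and I expect the real work to lie in showing that it does not spoil the covariant transformation law. I would first observe that each $\iota_{\hat{\partial}_{a'}}\chi$ is itself covariant, by Lem.~\ref{lem:InnnerDerConvariant} applied to the index-free invariant form $\chi$; one then has to analyze the correction by bidegree. It vanishes identically on forms with no horizontal legs, where $\iota_{\hat{\partial}_{a'}}\chi = 0$, which settles the invariant-function case at once. More generally, one must check that the correction contributes only in the kernel of the map $\psi_a \mapsto \psi_a \wedge dx^a$ — consistent with the invariance of $d\chi$ already established — and pinning down precisely how it is absorbed is the crux of the argument.
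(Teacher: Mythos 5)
Your reduction to the bracket $[\rho(v),\hat{\partial}_a]=-\tfrac{\partial v^{a'}}{\partial x^a}\hat{\partial}_{a'}$ is exactly the route the paper takes, and the step you isolate as the crux---replacing $\Lie_{[\hat{v},\hat{\partial}_a]}\chi$ by $-\tfrac{\partial v^{a'}}{\partial x^a}\Lie_{\hat{\partial}_{a'}}\chi$---is precisely the step the paper's proof performs silently. Your proposal therefore stops exactly where the printed proof jumps, and your suspicion is justified: the correction term $-d\bigl(\tfrac{\partial v^{a'}}{\partial x^a}\bigr)\wedge\iota_{\hat{\partial}_{a'}}\chi$ is not absorbed in general, and the program you sketch for forms with horizontal legs cannot be completed, because the statement fails there. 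Take $\chi=\Volg$, which is invariant by Lem.~\ref{lem:VolInvariant}; then $\Lie_{\hat{\partial}_a}\Volg=\Gamma^b_{ba}\Volg$ by Eq.~\eqref{eq:StrictHorDivergence}, and the transformation law for the connection coefficients computed in the proof of Lem.~\ref{lem:NablaCovariant} gives
\begin{equation*}
  \Lie_{\rho(v)}\bigl(\Gamma^b_{ba}\Volg\bigr)
  =-\frac{\partial v^{a'}}{\partial x^a}\,\Gamma^b_{ba'}\Volg
  -\frac{\partial^2 v^b}{\partial x^a\partial x^b}\,\Volg
  \,,
\end{equation*}
whose second summand is exactly your correction term (use $dx^e\wedge\iota_{\hat{\partial}_{a'}}\Volg=\delta^e_{a'}\Volg$) and is nonzero for, e.g., $v=(x^1)^2\partial_1$. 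So $\Lie_{\hat{\partial}_a}\Volg$ is not a covariant family, and no rearrangement of your final step can make it one.

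The lemma and its proof are correct precisely in the case you already settled: when $\chi$ is vertical, i.e.\ of bidegree $(p,0)$, so that $\iota_{\hat{\partial}_{a'}}\chi=0$ and the correction vanishes. Equivalently, one may then write $\Lie_{[\hat{v},\hat{\partial}_a]}\chi=\iota_{[\hat{v},\hat{\partial}_a]}d\chi$ and exploit the $C^\infty(M)$-linearity of the inner derivative---which is how the analogous step is handled in the proof of Lem.~\ref{lem:NablaCovariant}, whose hypothesis is explicitly restricted to families of vertical forms, the only case needed downstream. The right fix is therefore not to hunt for a cancellation but to add the hypothesis that $\chi$ is vertical; with that hypothesis your argument closes immediately and coincides with the paper's intended one.
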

\begin{proof}
We have
\begin{equation*}
\begin{split}
  \Lie_{\rho(v)} (\Lie_{\hat{\partial}_a} \chi)
  &=
  \Lie_{\xi_v + \hat{v}} (\Lie_{\hat{\partial}_a} \chi)
  \\
  &= \bigl(\Lie_{\hat{\partial}_a} \Lie_{\xi_v}
  + \Lie_{\hat{\partial}_a}\Lie_{\hat{v}}
  + \Lie_{[\hat{v}, \hat{\partial}_a]} \bigr) \chi
  \\
  &= \Lie_{\hat{\partial}_a} \Lie_{\xi_v + \hat{v}} \chi
  - \frac{\partial v^{a'}}{\partial x^a} (\Lie_{\hat{\partial}_{a'}}\chi)
  \\
  &= - \frac{\partial v^{a'}}{\partial x^a} (\Lie_{\hat{\partial}_{a'}}\chi)
  \,,
\end{split}
\end{equation*}
which shows that $\Lie_{\hat{\partial}_a} \chi$ is a covariant family.
\end{proof}

Lemma~\ref{lem:LieDerConvariant} holds only for an invariant form $\chi$. If $\chi_b$ is a covariant family, then $\Lie_{\hat{\partial}_a} \chi_b$ is \emph{not} covariant. In order to obtain a covariant family by differentiation we have to generalize the concept of covariant derivative to families of forms in the variational bicomplex.

\subsection{Covariant derivative of families of forms}
\label{sec:CovDerForms}

In the cohomological approach to general relativity, we have to interpret the connection coefficients, the covariant derivative, the curvature, the volume form, etc.~as expressions in the variational bicomplex. The connection coefficients of the Levi-Civita connection have to be viewed as functions on $J^\infty \Lor$ that are given in local coordinates by the expression
\begin{equation}
\label{eq:ConnectionCoeffs}
  \Gamma^a_{bc}
  = \tfrac{1}{2} g^{ad}(g_{db,c} + g_{dc,b} - g_{bc,d})
  \,.
\end{equation}
The covariant derivative has to be defined in the variational bicomplex as follows. For a family of forms $\chi_{a_1 \cdots a_p}^{b_1 \cdots b_q}$ that is covariant in the lower indices and contravariant in the upper indices we define
\begin{equation*}
  \nabla_c \chi_{a_1 \cdots a_p}^{b_1 \cdots b_q}
  = \Lie_{\hat{\partial}_c} \chi_{a_1 \cdots a_p}^{b_1 \cdots b_q}
  - \sum_{i=1}^p
  \Gamma_{c a_i}^{a'_i}\, 
  \chi_{a_1 \cdots a'_i \cdots  a_p}^{b_1 \cdots b_q}
  + \sum_{i=1}^q
  \Gamma_{c b'_i}^{b_i}\, 
  \chi_{a_1 \cdots a_p}^{b_1 \cdots  b'_i \cdots  b_q}
  \,.  
\end{equation*}
Using this definition, we can check by the usual calculation that the connection coefficients~\eqref{eq:ConnectionCoeffs} of the Levi-Civita connection is the unique family of functions symmetric in $b$ and $c$, such that $\nabla_c g_{ab} = 0$. The Riemann curvature tensor is given by $\Riem_{abc}{}^d\, \chi_d = (\nabla_a \nabla_b - \nabla_b \nabla_a) \chi_c$, which has now to be viewed as a family of functions on $J^\infty \Lor$.

\begin{Lemma}
\label{lem:NablaCovariant}
Let $\chi_b$ be a covariant family of vertical forms. Then the family $\nabla_a \chi_b$ is covariant in $a$ and $b$.
\end{Lemma}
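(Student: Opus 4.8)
The plan is to expand the definition
\begin{equation*}
  \nabla_a \chi_b = \Lie_{\hat{\partial}_a} \chi_b - \Gamma^{b'}_{ab}\chi_{b'}
  \,,
\end{equation*}
apply $\Lie_{\rho(v)}$ to both summands, and show that the inhomogeneous contributions (those involving second derivatives of $v$) produced by the two summands cancel, leaving precisely the transformation law of Def.~\ref{def:CovariantForms} for a family covariant in $a$ and $b$. The connection term is present for exactly this reason: by the remark following Lem.~\ref{lem:LieDerConvariant} the naive derivative $\Lie_{\hat{\partial}_a}\chi_b$ fails to be covariant, and the Christoffel symbol $\Gamma^{b'}_{ab}$ is engineered so that its inhomogeneous transformation repairs this failure.

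First I would treat the derivative term. As in the proofs of Lem.~\ref{lem:InnnerDerConvariant} and Lem.~\ref{lem:LieDerConvariant}, I use that the prolonged evolutionary field $\xi_v$ commutes with the total derivatives $\hat{\partial}_a$, and that the Cartan connection is flat, so that
\begin{equation*}
  [\rho(v), \hat{\partial}_a] = [\hat{v}, \hat{\partial}_a] = - \frac{\partial v^{e}}{\partial x^{a}}\, \hat{\partial}_e
  \,.
\end{equation*}
Combining $\Lie_{\rho(v)}\Lie_{\hat{\partial}_a} = \Lie_{\hat{\partial}_a}\Lie_{\rho(v)} + \Lie_{[\rho(v),\hat{\partial}_a]}$ with the covariance $\Lie_{\rho(v)}\chi_b = -\frac{\partial v^{b'}}{\partial x^b}\chi_{b'}$ gives
\begin{equation*}
  \Lie_{\rho(v)}\Lie_{\hat{\partial}_a}\chi_b
  = -\frac{\partial^2 v^{b'}}{\partial x^a \partial x^b}\chi_{b'}
    -\frac{\partial v^{b'}}{\partial x^b}\Lie_{\hat{\partial}_a}\chi_{b'}
    -\frac{\partial v^{a'}}{\partial x^a}\Lie_{\hat{\partial}_{a'}}\chi_b
  \,,
\end{equation*}
where the second derivative of $v$ appears precisely because $\Lie_{\hat{\partial}_a}$ now also differentiates the coefficient $\frac{\partial v^{b'}}{\partial x^b}$.

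The main work, and the step I expect to be the real obstacle, is the transformation law of the Levi-Civita symbols~\eqref{eq:ConnectionCoeffs}. I would first compute $\Lie_{\rho(v)} g_{db,c}$ by applying the same commutator identity to $g_{db,c} = \Lie_{\hat{\partial}_c} g_{db}$ and using that $g_{db}$ is covariant (Eq.~\eqref{eq:Lieg}); this yields the inhomogeneous first-jet transformation, carrying terms such as $-\frac{\partial^2 v^{d'}}{\partial x^c \partial x^d} g_{d'b}$. Feeding this, together with the contravariant transformation of $g^{ad}$ from Eq.~\eqref{eq:Lieginv}, into~\eqref{eq:ConnectionCoeffs} and symmetrizing the three metric-derivative terms, the unwanted combinations collapse and one is left with
\begin{equation*}
  \Lie_{\rho(v)}\Gamma^{b'}_{ab}
  = \frac{\partial v^{b'}}{\partial x^{c}}\Gamma^{c}_{ab}
    - \frac{\partial v^{a'}}{\partial x^{a}}\Gamma^{b'}_{a'b}
    - \frac{\partial v^{c}}{\partial x^{b}}\Gamma^{b'}_{ac}
    - \frac{\partial^2 v^{b'}}{\partial x^{a}\partial x^{b}}
  \,,
\end{equation*}
the tensorial transformation plus the inhomogeneous second-derivative term. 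Keeping the index placements and the three Christoffel contributions straight is where the calculation is most error-prone.

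Finally I would assemble the two pieces. Using $\Lie_{\rho(v)}(\Gamma^{b'}_{ab}\chi_{b'}) = (\Lie_{\rho(v)}\Gamma^{b'}_{ab})\chi_{b'} + \Gamma^{b'}_{ab}\Lie_{\rho(v)}\chi_{b'}$ and subtracting from the derivative term, the two inhomogeneous contributions $\mp\frac{\partial^2 v^{b'}}{\partial x^a\partial x^b}\chi_{b'}$ cancel, one of the Christoffel terms cancels against the term coming from $\Lie_{\rho(v)}\chi_{b'}$, and the surviving terms regroup, via the definition of $\nabla$, into
\begin{equation*}
  \Lie_{\rho(v)}\nabla_a\chi_b
  = -\frac{\partial v^{a'}}{\partial x^a}\nabla_{a'}\chi_b
    -\frac{\partial v^{b'}}{\partial x^b}\nabla_a\chi_{b'}
  \,,
\end{equation*}
which is exactly the assertion that $\nabla_a\chi_b$ is covariant in $a$ and $b$.
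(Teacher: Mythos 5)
Your proposal is correct and follows essentially the same route as the paper's proof: compute $\Lie_{\rho(v)}\Lie_{\hat{\partial}_a}\chi_b$ via the commutator $[\hat v,\hat\partial_a]$, derive the inhomogeneous transformation law of $\Gamma^c_{ab}$ from that of the first-jet coordinates $g_{ab,c}$, and observe that the second-derivative terms cancel in the assembly. Your stated transformation law for the Christoffel symbols agrees with the paper's, and your final signs are the ones required by Def.~\ref{def:CovariantForms}.
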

\begin{proof}
We have to compute the Lie derivative of $\nabla_a \chi_b = \Lie_{\hat{\partial}_a} \chi_b - \Gamma^c_{ab} \chi_c$ with respect to $\rho(v) = \xi_v + \hat{v}$. For the first summand we get
\begin{equation*}
\begin{split}
  \Lie_{\rho(v)}( \Lie_{\hat{\partial}_a} \chi_b)
  &=
  \Lie_{\hat{\partial}_a} (\Lie_{\xi_v + \hat{v}} \chi_b)
  + \Lie_{[\hat{v}, \hat{\partial}_a]} \chi_b
  \\
  &=
  \Lie_{\hat{\partial}_a} \Bigl(
  - \frac{\partial v^{b'}}{\partial x^b} \chi_{b'} 
  \Bigr)
  + \iota_{[\hat{v}, \hat{\partial}_a]} d\chi_b
  \\
  &=
  - \frac{\partial^2 v^{b'}}{\partial x^a \partial x^b} \chi_{b'}
  - \frac{\partial v^{b'}}{\partial x^b} (\Lie_{\hat{\partial}_a} \chi_{b'})
  - \frac{\partial v^{a'}}{\partial x^a} \iota_{\hat{\partial}_{a'}} d\chi_b
  \\
  &=
  - \frac{\partial v^{b'}}{\partial x^b} (\Lie_{\hat{\partial}_a} \chi_{b'})
  - \frac{\partial v^{a'}}{\partial x^a} (\Lie_{\hat{\partial}_{a'}} \chi_b)
  - \frac{\partial^2 v^{c}}{\partial x^a \partial x^b} \chi_{c}
  \,.
\end{split}    
\end{equation*}
For the second summand we must compute the Lie derivative of the connection coefficients. For this we need the following formula.
\begin{equation*}
\begin{split}
  \Lie_{\xi_v} g_{ab,c}
  &= \Lie_{\xi_v} \Lie_{\hat{\partial}_c} g_{ab}
  \\
  &= \Lie_{\hat{\partial}_c} \Lie_{\xi_v} g_{ab}
  \\
  &= - \Lie_{\hat{\partial}_c}
  \Bigl(
  \Lie_{\hat{v}} g_{ab} 
  + \frac{\partial v^{a'}}{\partial x^a} g_{a'b}
  + \frac{\partial v^{b'}}{\partial x^b} g_{ab'}
  \Bigr)
  \\
  &= - (\Lie_{[\hat{\partial}_c, \hat{v}]} 
  + \Lie_{\hat{v}} \Lie_{\hat{\partial}_c}) g_{ab}
  \\
  &\quad{}
  - \frac{\partial^2 v^{a'}}{\partial x^c \partial x^a} g_{a'b}
  - \frac{\partial v^{a'}}{\partial x^a} \delta g_{a'b, c}
  - \frac{\partial^2 v^{b'}}{\partial x^c \partial x^b} g_{ab'}
  - \frac{\partial v^{b'}}{\partial x^b} g_{ab', c}
  \\
  &= - \Lie_{\hat{v}} g_{ab,c}
  - \frac{\partial v^{a'}}{\partial x^a} g_{a'b, c}
  - \frac{\partial v^{b'}}{\partial x^b} g_{ab', c}
  - \frac{\partial v^{c'}}{\partial x^c} g_{ab, c'}
  \\
  &\quad{}
  - \frac{\partial^2 v^{a'}}{\partial x^c \partial x^a} g_{a'b}
  - \frac{\partial^2 v^{b'}}{\partial x^c \partial x^b} g_{ab'}
\end{split}
\end{equation*}
With this relation, we can compute the action of vector fields on the connection coefficients, which yields
\begin{equation*}
  \Lie_{\rho(v)} \Gamma^c_{ab}
  =
  \frac{\partial v^c}{\partial x^{c'}} \Gamma^{c'}_{ab}
  - \frac{\partial v^{a'}}{\partial x^{a}} \Gamma^{c}_{a'b}
  - \frac{\partial v^{b'}}{\partial x^{b}} \Gamma^{c}_{ab'}
  - \frac{\partial^2 v^c}{\partial x^a \partial x^b}
  \,.
\end{equation*}
Putting everything together, we obtain
\begin{equation*}
\begin{split}
  \Lie_{\rho(v)}( \nabla_a \chi_b)
  &=
  \Lie_{\rho(v)} \Lie_{\hat{\partial}_a} \chi_b
  - (\Lie_{\rho(v)} \Gamma^c_{ab}) \chi_c
  - \Gamma^c_{ab}(\Lie_{\rho(v)} \chi_c)
  \\
  &= \frac{\partial v^{a'}}{\partial x^a}(\nabla_{a'} \chi_b)
  + \frac{\partial v^{b'}}{\partial x^a}(\nabla_{a} \chi_{b'})
  \,,
\end{split}    
\end{equation*}
where the terms containing the second order derivatives of $v^a$ cancel. This finishes the proof.
\end{proof}

\subsection{Divergence formulas}
\label{sec:VolumeForm}

In the variational bicomplex, the metric volume form is the $(0,n)$-form on $J^\infty \Lor$ defined as 
\begin{equation}
  \Volg = \sqrt{- \det g}\,\, dx^1 \wedge \ldots \wedge dx^n
  \,.
\end{equation}
We recall that we have adopted the ``east coast'' sign convention for Lorentz metrics with 1 negative and $n-1$ positive signs, so that $\det g$ is negative. The partial derivative of the square root of the determinant with respect to the 0-jet coordinates is given by
\begin{equation*}
  \frac{\partial}{\partial g_{ab}} \sqrt{- \det g}
  = \tfrac{1}{2} g^{ab} \sqrt{- \det g}
  \,.
\end{equation*}
The partial derivatives with respect to $x^a$ and all higher jet coordinates $g_{ab,C}$ vanish. It follows that the vertical and the horizontal differentials are given by 
\begin{align*}
  \delta \sqrt{- \det g}
  &= \tfrac{1}{2} g^{ab} \delta g_{ab} \sqrt{- \det g}
  \\
  d \sqrt{- \det g}
  &= \tfrac{1}{2} g^{ab} g_{ab,c} \sqrt{- \det g} \,\, dx^c
  \,.
\end{align*}
For the vertical differential of the volume form we obtain
\begin{equation}
\label{eq:deltaVol}
  \delta \Volg = \tfrac{1}{2} g^{ab}\delta g_{ab} \Volg 
  \,.
\end{equation}

Although $\Volg$ is not a volume form on $J^\infty \Lor$, every $(0,n)$-form $\tau$ can be written as 
\begin{equation*}
  \tau = f dx^1 \wedge \ldots \wedge dx^n
  = \frac{f}{\sqrt{-\det g}} \Volg
  \,,
\end{equation*}
for a unique function $f \in C^\infty(J^\infty \Lor)$. Therefore, we can define the divergence of a vector field $X \in \calX(J^\infty \Lor)$ by the relation
\begin{equation*}
  \Lie_X \Volg = (\Div\, X) \Volg
  \,.
\end{equation*}
For a strictly horizontal vector field $\hat{v}$ we have
\begin{equation}
\label{eq:StrictHorDivergence}
\begin{split}
  \Lie_{\hat{v}} \Volg
  &= (\Lie_{\hat{v}} \sqrt{-\det g}) \,\, 
  dx^1 \wedge \ldots \wedge dx^n
  + \sqrt{-\det g} \,\, 
  \Lie_{\hat{v}}(dx^1 \wedge \ldots \wedge dx^n)
  \\
  &= \Bigl( \tfrac{1}{2} g^{ab} g_{ab,c} v^c 
  + \frac{\partial v^c}{\partial x^c} \Bigr)
  \Volg
  = \Bigl( \Gamma^{a}_{ac} v^c
  + \frac{\partial v^c}{\partial x^c} \Bigr)
  \Volg
  \\ 
  &= (\nabla_a v^a) \Volg
  \,.
\end{split}
\end{equation}
We conclude that
\begin{equation*}
  \Div\, \hat{v} = \nabla_a v^a
  \,.
\end{equation*}
While this looks like the usual expression, we point out that the divergence $\Div\, \hat{v}$ is now a function on $J^1 \Lor$. 

\begin{Lemma}
\label{lem:VolInvariant}
The metric volume form is invariant.
\end{Lemma}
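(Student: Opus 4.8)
The plan is to split the action into its strictly vertical and strictly horizontal parts, $\Lie_{\rho(v)} = \Lie_{\xi_v} + \Lie_{\hat{v}}$, and to show that the two contributions to $\Lie_{\rho(v)}\Volg$ are equal and opposite, so that they cancel. The horizontal contribution is already in hand: Eq.~\eqref{eq:StrictHorDivergence} gives $\Lie_{\hat{v}}\Volg = (\nabla_a v^a)\,\Volg$. It therefore remains to compute the vertical contribution and to verify that $\Lie_{\xi_v}\Volg = -(\nabla_a v^a)\,\Volg$.

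First I would use that $\xi_v$ is strictly vertical, so that $\Lie_{\xi_v} = [\iota_{\xi_v}, \delta]$, and that $\Volg$ is a purely horizontal $(0,n)$-form, whence $\iota_{\xi_v}\Volg = 0$ for degree reasons. Thus $\Lie_{\xi_v}\Volg = \iota_{\xi_v}\delta\Volg$. Applying Eq.~\eqref{eq:deltaVol} together with the Leibniz rule for $\iota_{\xi_v}$ and the identity $\iota_{\xi_v}\delta g_{ab} = \Lie_{\xi_v} g_{ab} = \xi_{ab}$, this collapses to $\Lie_{\xi_v}\Volg = \tfrac{1}{2}\, g^{ab}\,\xi_{ab}\,\Volg$, where $\xi_{ab}$ is the $0$-jet component read off from Eq.~\eqref{eq:xivEvol}.

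The crux of the argument is then the purely algebraic identity $\tfrac{1}{2}\, g^{ab}\,\xi_{ab} = -\nabla_a v^a$. Substituting $\xi_{ab}$ from Eq.~\eqref{eq:xivEvol}, the term $-\tfrac{1}{2}\, g^{ab}\, v^c\, g_{ab,c}$ becomes $-v^c \Gamma^a_{ac}$ via the trace identity $\Gamma^a_{ac} = \tfrac{1}{2}\, g^{ab}\, g_{ab,c}$ (the remaining two Christoffel terms cancel by symmetry of $g$), while each of the two terms $-\tfrac{1}{2}\, g^{ab}\,\partial_a v^{a'}\, g_{a'b}$ and $-\tfrac{1}{2}\, g^{ab}\,\partial_b v^{b'}\, g_{ab'}$ collapses to $-\tfrac{1}{2}\,\partial_a v^a$ using $g^{ab} g_{a'b} = \delta^a_{a'}$. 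Summing yields $-\bigl(\partial_a v^a + \Gamma^a_{ac} v^c\bigr) = -\nabla_a v^a$, which is precisely $-\Div\,\hat{v}$. This is the one step where the determinant factor $\sqrt{-\det g}$ and the distinction between an invariant form and a density enter essentially.

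Finally I would add the two pieces, obtaining $\Lie_{\rho(v)}\Volg = -(\nabla_a v^a)\,\Volg + (\nabla_a v^a)\,\Volg = 0$ for every $v \in \calX(M)$, which establishes the invariance. The only genuine obstacle is keeping the index contractions in the trace identity straight; everything else is a direct application of the divergence formula~\eqref{eq:StrictHorDivergence} and the differential formulas already established.
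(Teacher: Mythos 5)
Your proof is correct and follows essentially the same route as the paper: compute $\Lie_{\xi_v}\Volg = \iota_{\xi_v}\delta\Volg = \tfrac{1}{2}g^{ab}\xi_{ab}\Volg$, simplify it via the trace identity $\Gamma^a_{ac} = \tfrac{1}{2}g^{ab}g_{ab,c}$ to $-(\nabla_a v^a)\Volg$, and cancel it against $\Lie_{\hat{v}}\Volg$ from Eq.~\eqref{eq:StrictHorDivergence}. The only difference is that you spell out the Christoffel trace computation which the paper leaves implicit.
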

\begin{proof}
For the Lie derivative of the volume form with respect to the vertical vector field we obtain
\begin{equation*}
\begin{split}
  \Lie_{\xi_v} \Volg
  &= \iota_{\xi_v} \delta \Volg
  \\
  &= -\tfrac{1}{2} g^{ab}
  \Bigl(\Lie_{\hat{v}} g_{ab} 
  + \frac{\partial v^{a'}}{\partial x^a} g_{a'b}
  + \frac{\partial v^{b'}}{\partial x^b} g_{ab'} \Bigr) \Volg
  \\
  &=  
  - \Bigl(v^c \tfrac{1}{2} g^{ab} g_{ab,c} 
  + \frac{\partial v^{c}}{\partial x^c} \Bigr) \Volg
  \\
  &= - \Lie_{\hat{v}} \Volg
  \,,
\end{split}
\end{equation*}
where in the last step we have used Eq.~\eqref{eq:StrictHorDivergence}. We conclude that $\Lie_{\rho(v)} \Volg = 0$ for all $v \in \calX(M)$.
\end{proof}

\begin{Remark}
Lem.~\ref{lem:VolInvariant} can be stated by saying that $\xi_v + \hat{v}$ is divergence free.
\end{Remark}

From the formula for the divergence of a vector field we deduce
\begin{equation*}
  (\nabla_a v^a) \Volg
  = d\bigl( v^a \iota_{\hat{\partial}_a} \Volg)
  \,.
\end{equation*}
This formula generalizes to higher vertical forms, as we will show next.

\begin{Proposition}
\label{prop:DivFormula1}
Let $\chi^a$ be a family of $(p,0)$-forms on $J^\infty \Lor$. Then \begin{equation}
\label{eq:dDiv1}
  \nabla_a \chi^a \wedge \Volg 
  = (-1)^p d (\chi^a \wedge \iota_{\hat{\partial}_a} \Volg)
  \,.
\end{equation}
\end{Proposition}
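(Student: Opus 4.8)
The plan is to expand the right-hand side of~\eqref{eq:dDiv1} directly, using the coordinate formula~\eqref{eq:dGeneralForm} for the horizontal differential. Since $\chi^a$ has bidegree $(p,0)$ and $\iota_{\hat\partial_a}\Volg$ has bidegree $(0,n-1)$, the form $\chi^a\wedge\iota_{\hat\partial_a}\Volg$ has bidegree $(p,n-1)$, so~\eqref{eq:dGeneralForm} gives
\begin{equation*}
  d(\chi^a\wedge\iota_{\hat\partial_a}\Volg)
  = (-1)^{p+n-1}\bigl(\Lie_{\hat\partial_c}(\chi^a\wedge\iota_{\hat\partial_a}\Volg)\bigr)\wedge dx^c
  \,,
\end{equation*}
where $c$ is a fresh summation index distinct from the contracted $a$. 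Because $\Lie_{\hat\partial_c}$ is a degree-zero derivation, I would apply the Leibniz rule to split the bracket into a term differentiating $\chi^a$ and a term differentiating $\iota_{\hat\partial_a}\Volg$.

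For the second term I need $\Lie_{\hat\partial_c}(\iota_{\hat\partial_a}\Volg)$. Using the standard identity $\Lie_X\iota_Y-\iota_Y\Lie_X=\iota_{[X,Y]}$ together with the flatness of the Cartan connection, which yields $[\hat\partial_c,\hat\partial_a]=\widehat{[\partial_c,\partial_a]}=0$, this reduces to $\iota_{\hat\partial_a}\Lie_{\hat\partial_c}\Volg$. Applying~\eqref{eq:StrictHorDivergence} to the constant-coefficient field $\partial_c$ (so that $v^b=\delta^b_c$ and the ordinary-derivative term drops out) gives $\Lie_{\hat\partial_c}\Volg=\Gamma^b_{bc}\Volg$, whence $\Lie_{\hat\partial_c}(\iota_{\hat\partial_a}\Volg)=\Gamma^b_{bc}\,\iota_{\hat\partial_a}\Volg$.

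The remaining ingredient is the algebraic identity $\iota_{\hat\partial_a}\Volg\wedge dx^c=(-1)^{n-1}\delta^c_a\,\Volg$, which follows by applying $\iota_{\hat\partial_a}$ to the vanishing top-horizontal form $\Volg\wedge dx^c$ and using $\iota_{\hat\partial_a}dx^c=\delta^c_a$. Substituting this into both terms and summing over the Kronecker delta collapses the free index, turning $\Lie_{\hat\partial_c}\chi^a$ into $\Lie_{\hat\partial_a}\chi^a$ and $\Gamma^b_{bc}\chi^a$ into $\Gamma^b_{ba}\chi^a$. By the definition of the covariant derivative of a contravariant family one has $\Lie_{\hat\partial_a}\chi^a+\Gamma^b_{ba}\chi^a=\nabla_a\chi^a$, so the bracket becomes $(\nabla_a\chi^a)\wedge\Volg$. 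Finally, the prefactor $(-1)^{p+n-1}$ from~\eqref{eq:dGeneralForm} combines with the $(-1)^{n-1}$ from the contraction identity to give exactly $(-1)^p$, yielding $d(\chi^a\wedge\iota_{\hat\partial_a}\Volg)=(-1)^p(\nabla_a\chi^a)\wedge\Volg$, which rearranges to~\eqref{eq:dDiv1}. The only delicate point is the sign bookkeeping: one must track the bidegree-dependent sign in~\eqref{eq:dGeneralForm} and the $(-1)^{n-1}$ in the contraction identity, and check that their $n$-dependence cancels to leave the clean factor $(-1)^p$.
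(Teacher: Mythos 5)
Your proposal is correct and follows essentially the same route as the paper's proof: expand $d(\chi^a\wedge\iota_{\hat{\partial}_a}\Volg)$ via~\eqref{eq:dGeneralForm}, apply the Leibniz rule, commute $\Lie_{\hat{\partial}_c}$ past $\iota_{\hat{\partial}_a}$ using flatness of the Cartan connection, invoke $\Lie_{\hat{\partial}_c}\Volg=\Gamma^b_{bc}\Volg$ from~\eqref{eq:StrictHorDivergence}, and contract with $(\iota_{\hat{\partial}_a}\Volg)\wedge dx^c=(-1)^{n-1}\delta^c_a\Volg$. The sign bookkeeping you describe, $(-1)^{p+n-1}\cdot(-1)^{n-1}=(-1)^p$, matches the paper exactly (the paper merely absorbs the $(-1)^p$ into the definition of the auxiliary form at the outset rather than tracking it at the end).
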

\begin{proof}
Consider the $(p,n-1)$-form
\begin{equation*}
  \chi = (-1)^p\chi^a \wedge \iota_{\hat{\partial}_a} \Volg
  \,,
\end{equation*}
where the $\chi^a$ are $(p,0)$-forms. The horizontal differential of $\chi$ is given by
\begin{equation*}
\begin{split}
  d\chi 
  &= (-1)^{p+n-1} (\Lie_{\hat{\partial}_c}\chi) \wedge dx^c
  \\
  &= (-1)^{n-1} \Lie_{\hat{\partial}_c} 
  (\chi^a \wedge \iota_{\hat{\partial}_a} \Volg) \wedge dx^c
  \\
  &= (\Lie_{\hat{\partial}_c} \chi^a) \wedge 
  (-1)^{n-1}( \iota_{\hat{\partial}_a} \Volg) \wedge dx^c
  + \chi^a \wedge (-1)^{n-1} (\Lie_{\hat{\partial}_c}
  \iota_{\hat{\partial}_a} \Volg) \wedge dx^c
  \\
  &= (\Lie_{\hat{\partial}_a} \chi^a) \wedge \Volg
  + \chi^a \wedge (-1)^{n-1}  
  (\iota_{\hat{\partial}_a} \Lie_{\hat{\partial}_c} \Volg) 
  \wedge dx^c
  \\
  &= (\Lie_{\hat{\partial}_a} \chi^a) \wedge \Volg 
  + \chi^a \wedge (-1)^{n-1}\iota_{\hat{\partial}_a}
  (\Gamma^b_{bc} \Volg) \wedge dx^c
  \\
  &= (\Lie_{\hat{\partial}_a} \chi^a 
  + \Gamma^b_{ba}\chi^a) \wedge \Volg 
  \\
  &= (\nabla_a \chi^a) \wedge \Volg
  \,,
\end{split}    
\end{equation*}
where we have used Eq.~\eqref{eq:dGeneralForm}, the Leibniz rule, the relation
\begin{equation*}
  (\iota_{\hat{\partial}_a}\Volg) \wedge dx^c
  = (-1)^{n-1}\delta^c_a \Volg
  \,,
\end{equation*}
and Eq.~\eqref{eq:StrictHorDivergence}.
\end{proof}

For later use, we generalize the formula~\eqref{eq:dDiv1} further to families of $(p,1)$-forms.

\begin{Proposition}
\label{prop:DivFormula2}
Let $\chi^{ab}$ be a family of $(p,1)$-forms on $J^\infty \Lor$ such that $\chi^{ab} = - \chi^{ba}$. Then 
\begin{equation}
\label{eq:dDiv2}
  \nabla_a \chi^{ab} \wedge \iota_{\hat{\partial}_b} \Volg 
  = (-1)^p d (\tfrac{1}{2}\chi^{ab} \wedge 
  \iota_{\hat{\partial}_a}\iota_{\hat{\partial}_b} \Volg)
  \,.
\end{equation}
\end{Proposition}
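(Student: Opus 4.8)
The plan is to adapt the proof of Prop.~\ref{prop:DivFormula1} to the doubly contracted volume form. Consider the $(p,n-1)$-form
\[
  \chi := (-1)^p \tfrac{1}{2}\, \chi^{ab} \wedge
  \iota_{\hat{\partial}_a}\iota_{\hat{\partial}_b} \Volg
  \,,
\]
so that the right-hand side of~\eqref{eq:dDiv2} equals $(-1)^p d$ applied to $(-1)^p\chi$, i.e.\ simply $d\chi$. I would compute $d\chi$ directly from Eq.~\eqref{eq:dGeneralForm}, $d\chi = (-1)^{p+n-1}(\Lie_{\hat{\partial}_c}\chi)\wedge dx^c$, and show that it reproduces $\nabla_a \chi^{ab}\wedge\iota_{\hat{\partial}_b}\Volg$. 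The Leibniz rule splits $\Lie_{\hat{\partial}_c}(\chi^{ab}\wedge \iota_{\hat{\partial}_a}\iota_{\hat{\partial}_b}\Volg)$ into a term differentiating $\chi^{ab}$ and a term differentiating the contracted volume form; the first will supply the $\Lie_{\hat{\partial}_a}\chi^{ab}$ part of the covariant derivative and the second the Christoffel part.

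The key algebraic input is the contraction identity
\[
  (\iota_{\hat{\partial}_a}\iota_{\hat{\partial}_b}\Volg)\wedge dx^c
  = (-1)^{n-1}\bigl( \delta^c_b\,\iota_{\hat{\partial}_a}\Volg
  - \delta^c_a\,\iota_{\hat{\partial}_b}\Volg \bigr)
  \,,
\]
which I would obtain by applying the antiderivation $\iota_{\hat{\partial}_a}$ to the single-contraction relation $(\iota_{\hat{\partial}_b}\Volg)\wedge dx^c = (-1)^{n-1}\delta^c_b\Volg$ already used in Prop.~\ref{prop:DivFormula1} and solving for the wanted term. Substituting this into the first Leibniz term and using the antisymmetry $\chi^{ab}=-\chi^{ba}$, the two $\delta$-contributions coincide after relabelling the summation indices, the factor $\tfrac{1}{2}$ is absorbed, and one is left with $(\Lie_{\hat{\partial}_a}\chi^{ab})\wedge\iota_{\hat{\partial}_b}\Volg$.

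For the second Leibniz term I would use that the Cartan connection is flat, so the horizontal lifts commute, $[\hat{\partial}_a,\hat{\partial}_b]=0$, and $\Lie_{\hat{\partial}_c}$ therefore passes through the two interior products; combined with $\Lie_{\hat{\partial}_c}\Volg = \Gamma^d_{dc}\Volg$ from Eq.~\eqref{eq:StrictHorDivergence} this yields $\Gamma^d_{dc}\,\iota_{\hat{\partial}_a}\iota_{\hat{\partial}_b}\Volg$, and the same contraction identity plus antisymmetrisation turn it into the trace-Christoffel term $\Gamma^a_{ad}\chi^{db}\wedge\iota_{\hat{\partial}_b}\Volg$. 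A point worth emphasising is that the covariant derivative $\nabla_a\chi^{ab}$ a priori carries two Christoffel contributions, $\Gamma^a_{ad}\chi^{db}+\Gamma^b_{ad}\chi^{ad}$, but the second vanishes because $\Gamma^b_{ad}$ is symmetric in $a,d$ (the Levi-Civita connection is torsion free) while $\chi^{ad}$ is antisymmetric. This is exactly where the hypothesis $\chi^{ab}=-\chi^{ba}$ enters, and it is the reason the clean divergence form survives with only the trace term.

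The step I expect to be the main obstacle is the sign and index bookkeeping: tracking the $(-1)^{n-1}$ factors through the double contraction, the degree-dependent signs in the Leibniz rules for $d$ and $\iota$, and the relabellings used in the antisymmetrisation. I would pin down the overall sign at the end by testing the completed formula on a low-dimensional case, e.g.\ $n=2$ with $\chi^{12}=-\chi^{21}$, where $\iota_{\hat{\partial}_1}\iota_{\hat{\partial}_2}\Volg$ reduces to a single function and both sides can be compared coefficient by coefficient.
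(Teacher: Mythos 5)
Your proposal is correct and follows essentially the same route as the paper: same primitive $(p,n-1)$-form with the factor $\tfrac{1}{2}(-1)^p$, same double-contraction identity obtained by applying $\iota_{\hat{\partial}_a}$ to the single-contraction relation, same use of $\Lie_{\hat{\partial}_c}\Volg = \Gamma^d_{dc}\Volg$, and the same observation that the second Christoffel term $\Gamma^b_{ad}\chi^{ad}$ drops out by the symmetry of $\Gamma^b_{ad}$ against the antisymmetry of $\chi^{ad}$. The only remaining work is the sign bookkeeping you already flag, which goes through exactly as in Prop.~\ref{prop:DivFormula1}.
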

\begin{proof}
Consider the $(p, n-2)$-form
\begin{equation*}
  \chi = \tfrac{1}{2}(-1)^p\chi^{ab} \wedge 
  \iota_{\hat{\partial}_a}\iota_{\hat{\partial}_b} \Volg
  \,.
\end{equation*}
We have the relation
\begin{equation*}
\begin{split}
  (\iota_{\hat{\partial}_a}\iota_{\hat{\partial}_b}\Volg) \wedge dx^c
  &= \iota_{\hat{\partial}_a}[(\iota_{\hat{\partial}_b}\Volg) \wedge dx^c] 
  - (-1)^{n-1}(\iota_{\hat{\partial}_b}\Volg) \wedge 
  (\iota_{\hat{\partial}_a} dx^c )
  \\
  &= [\iota_{\hat{\partial}_a}(-1)^{n-1}\delta^c_b \Volg ] 
  - (-1)^{n-1}(\iota_{\hat{\partial}_b}\Volg) \delta^c_a
  \\
  &= (-1)^{n-1}(
    \delta^c_b \iota_{\hat{\partial}_a} 
  - \delta^c_a \iota_{\hat{\partial}_b} 
  ) \, \Volg
  \,.
\end{split}
\end{equation*}
Moreover, since $\chi^{ab} = -\chi^{ba}$, we have
\begin{equation*}
\begin{split}
  \nabla_a \chi^{ab}
  &= \Lie_{\hat{\partial}_a} \chi^{ab}
  + \Gamma^a_{ad} \chi^{db} + \Gamma^b_{ad} \chi^{ad}
  \\
  &= \Lie_{\hat{\partial}_a} \chi^{ab}
  + \Gamma^a_{ad} \chi^{db}
  \,.
\end{split}
\end{equation*}
Using these relations, we can compute the horizontal differential of $\chi$ as
\begin{equation*}
\begin{split}
  d\chi 
  &= \tfrac{1}{2}(-1)^{p+n-2} 
  (\Lie_{\hat{\partial}_c} \chi) \wedge dx^c
  \\
  &= \tfrac{1}{2}(\Lie_{\hat{\partial}_c} \chi^{ab}) \wedge 
  (-1)^{n-2}(
  \iota_{\hat{\partial}_a}\iota_{\hat{\partial}_b} \Volg) \wedge dx^c
  \\
  &\quad{}
  + \tfrac{1}{2}\chi^{ab} \wedge (-1)^{n-2}\Lie_{\hat{\partial}_c}(
  \iota_{\hat{\partial}_a}\iota_{\hat{\partial}_b} \Volg) \wedge dx^c
  \\
  &= \tfrac{1}{2}(\Lie_{\hat{\partial}_c} \chi^{ab} 
  + \chi^{ab} \Gamma^d_{dc}) \wedge 
  (-1)^{n-2}(\iota_{\hat{\partial}_a}\iota_{\hat{\partial}_b}\Volg) \wedge dx^c
  \\
  &= \tfrac{1}{2}(\Lie_{\hat{\partial}_c} \chi^{ab} 
  + \chi^{ab} \Gamma^{d}_{dc}) \wedge
  (-1)(    
  \delta^c_b \iota_{\hat{\partial}_a} 
  - \delta^c_a \iota_{\hat{\partial}_b} 
  )\Volg
  \\
  &= (\Lie_{\hat{\partial}_a} \chi^{ab}  
  + \chi^{ab} \Gamma^{d}_{da} ) \wedge
   \iota_{\hat{\partial}_b} \Volg
  \\
  &= (\nabla_a \chi^{ab}) \wedge \iota_{\hat{\partial}_b}\Volg
  \,,
\end{split}    
\end{equation*}
which finishes the proof.
\end{proof}

\section{The homotopy momentum map of general relativity}
\label{sec:GRmomentum}

We now have all the tools needed for the multisymplectic interpretation of the diffeomorphism symmetry of general relativity. We start by recalling the Euler-Lagrange and the standard boundary form. Then we show in Thm.~\ref{thm:LepageInvariant} that the Lepage form is invariant under the diagonal action of vector fields. In other words, the action of vector fields is a manifest diffeomorphism symmetry of general relativity in the sense of Def.~\ref{def:ManifestDiffSym}. It follows from Prop.~\ref{prop:ManifestMomMap} that the symmetry has a homotopy momentum map, which is given explicitly in Thm.~\ref{thm:GRHomMomentum}.

\subsection{Euler-Lagrange and boundary form}
\label{eq:LepageGR}

The lagrangian form of the Hilbert-Einstein action is
\begin{equation}
\label{eq:HilbEinstForm}
  L = R \,\Volg \,,
\end{equation}
where $R$ is the scalar curvature, which has to be interpreted within the variational bicomplex as a function on $J^\infty \Lor$ as follows: The Riemann curvature tensor is given in local coordinates in terms of the connection coefficients~\eqref{eq:ConnectionCoeffs} by
\begin{equation*}
  \Riem_{abc}{}^d
  = \hat{\partial}_{b} \Gamma^d_{ac}
  - \hat{\partial}_{a} \Gamma^d_{bc}
  + \Gamma^e_{ac} \Gamma^d_{eb}
  - \Gamma^e_{bc} \Gamma^d_{ea}
  \,.
\end{equation*}
This is the usual formula \cite[Eq.~(3.4.4)]{Wald:GR} with the partial coordinate derivatives replaced by the Cartan lifts $\hat{\partial}_a$ and $\hat{\partial}_b$. The Ricci curvature is given by the contraction $\Ric_{ab} := \Riem_{aeb}{}^e$ and the scalar curvature by the trace of the Ricci curvature $R = g^{ab}\Ric_{ab}$.

The vertical differential of the scalar curvature $R = g^{ab} \Ric_{ab}$ is given by
\begin{equation*}
  \delta (g^{ab} \Ric_{ab})
  = \delta g^{ab} \Ric_{ab} + g^{ab} \delta \Ric_{ab}
  \,.
\end{equation*}
The first term can be written as
\begin{equation*}
  \delta g^{ab} \Ric_{ab}
  = - \Ric^{ab} \delta g_{ab}
\end{equation*}
The second term is given by \cite[Eq.~(E.1.15)]{Wald:GR}
\begin{equation*}
  g^{ab} \delta \Ric_{ab} = \nabla^a \gamma_a
\end{equation*}
where
\begin{equation*}
  \gamma_a = g^{bc}(\nabla_c \delta g_{ab} 
  - \nabla_a \delta g_{bc})
  \,,
\end{equation*}
and where the covariant derivative is to be understood as
\begin{equation*}
\begin{split}
  \nabla^a \gamma_a 
  &= g^{ab} (\Lie_{\hat{\partial}_a} \gamma_b 
    - \Gamma^{c}_{ab} \gamma_{c} )
  \,,
\end{split}
\end{equation*}
as explained in Sec.~\ref{sec:CovDerForms}. The vertical differential of the volume form was computed in Eq.~\eqref{eq:deltaVol}. Putting everything together, we get
\begin{equation*}
\begin{split}
  \delta L
  &= - \bigl( \Ric^{ab} - \tfrac{1}{2} R g^{ab} \bigr)\delta g_{ab}
  \wedge \Volg
  + \nabla^a \gamma_a \wedge \Volg
  \,.
\end{split}
\end{equation*}
The first term is the Euler-Lagrange form
\begin{equation*}
  EL = - G^{ab} \delta g_{ab} \wedge \Volg
  \,,
\end{equation*}
where
\begin{equation*}
  G^{ab} = \Ric^{ab} - \tfrac{1}{2} R g^{ab}
\end{equation*}
is the Einstein tensor. The Einstein tensor is divergence-free, i.e.
\begin{equation*}
\begin{split}
  \nabla_a G^{ab}
  &= \Lie_{\hat{\partial}_a} G^{ab}
  + \Gamma^{a}_{ac} G^{cb} + \Gamma^{b}_{ac} G^{ac}
  \\
  &= 0
  \,.
\end{split}  
\end{equation*}
Using Eq.~\eqref{eq:dDiv1}, the second term can be written as a $d$-exact term
\begin{equation*}
  (\nabla^a \gamma_a) \wedge \Volg 
  = - d\gamma
  \,,
\end{equation*}
where
\begin{equation}
\label{eq:BoundarForm}
\begin{split}
  \gamma 
  &= \gamma^a \wedge \iota_{\hat{\partial}_a} \Volg
  \\
  &= g^{ad} g^{bc}(\nabla_c \delta g_{ab} 
  - \nabla_a \delta g_{bc})
  \wedge \iota_{\hat{\partial}_d} \Volg
\end{split}
\end{equation}
is the boundary form.


\subsection{Invariance of the Lepage form}

\begin{Theorem}
\label{thm:LepageInvariant}
The Lepage form $L + \gamma$ given by the sum of the Hilbert-Einstein lagrangian~\eqref{eq:HilbEinstForm} and the boundary form~\eqref{eq:BoundarForm} is invariant under the action~\eqref{eq:DiffActionGR} of spacetime vector fields. In other words, the action is a manifest diffeomorphism symmetry in the sense of Def.~\ref{def:ManifestDiffSym}.
\end{Theorem}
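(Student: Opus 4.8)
The plan is to prove the two summands of the Lepage form invariant separately, in each case recognizing the form as a full contraction of a covariant family against a contravariant family, so that invariance reduces to the contraction Lemma~\ref{lem:ContractionInvariant} together with the invariance of the metric volume form (Lem.~\ref{lem:VolInvariant}). Throughout I use that $\Lie_{\rho(v)}$ is a degree-$0$ derivation of $\Omega(J^\infty\Lor)$ and that the covariant-derivative, wedge, and contraction lemmas of Sec.~\ref{sec:VarBicomplex} extend verbatim to families carrying several indices.

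For the Lagrangian $L = R\,\Volg$ it suffices, since $\Volg$ is invariant, to show that the scalar curvature $R = g^{ab}\Ric_{ab}$ is invariant. The essential input is the transformation law of the connection coefficients already obtained inside the proof of Lem.~\ref{lem:NablaCovariant},
\begin{equation*}
  \Lie_{\rho(v)}\Gamma^c_{ab}
  = \frac{\partial v^c}{\partial x^{c'}}\Gamma^{c'}_{ab}
  - \frac{\partial v^{a'}}{\partial x^a}\Gamma^c_{a'b}
  - \frac{\partial v^{b'}}{\partial x^b}\Gamma^c_{ab'}
  - \frac{\partial^2 v^c}{\partial x^a\partial x^b}
  \,,
\end{equation*}
which is precisely the classical inhomogeneous law. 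Substituting it into the coordinate expression for $\Riem_{abc}{}^d$ in terms of the $\Gamma^c_{ab}$, the inhomogeneous second-derivative contributions cancel in the familiar way, so that $\Riem_{abc}{}^d$ is covariant in $a,b,c$ and contravariant in $d$; conceptually the same conclusion follows from the defining relation $\Riem_{abc}{}^d\chi_d = (\nabla_a\nabla_b-\nabla_b\nabla_a)\chi_c$ and the iterated application of Lem.~\ref{lem:NablaCovariant}. Contracting, $\Ric_{ab}=\Riem_{aeb}{}^e$ is covariant and $R = g^{ab}\Ric_{ab}$ is invariant by Lem.~\ref{lem:ContractionInvariant}, whence $\Lie_{\rho(v)}L=0$.

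For the boundary form $\gamma = \gamma^a\wedge\iota_{\hat{\partial}_a}\Volg$ I would check that each factor transforms correctly. The family $\delta g_{ab}$ is covariant by Eq.~\eqref{eq:Liedeltag}, so $\nabla_c\delta g_{ab}$ and $\nabla_a\delta g_{bc}$ are covariant families of vertical $1$-forms by the multi-index version of Lem.~\ref{lem:NablaCovariant}; contracting the pair against $g^{bc}$ and raising with the metric (as in the remark following Lem.~\ref{lem:ContractionInvariant}) shows that $\gamma^a$ is contravariant. That the factor $\iota_{\hat{\partial}_a}\Volg$ is covariant I would obtain by the commutator computation underlying Lems.~\ref{lem:InnnerDerConvariant} and~\ref{lem:LieDerConvariant}: since
\begin{equation*}
  \Lie_{\rho(v)}\bigl(\iota_{\hat{\partial}_a}\Volg\bigr)
  = \iota_{\hat{\partial}_a}\Lie_{\rho(v)}\Volg
  + \iota_{[\rho(v),\hat{\partial}_a]}\Volg
  \,,
\end{equation*}
the first term vanishes by Lem.~\ref{lem:VolInvariant}, while $[\xi_v,\hat{\partial}_a]=0$ and $[\hat{v},\hat{\partial}_a]=-\tfrac{\partial v^{a'}}{\partial x^a}\hat{\partial}_{a'}$ reduce the second term to $-\tfrac{\partial v^{a'}}{\partial x^a}\,\iota_{\hat{\partial}_{a'}}\Volg$, exactly the covariance law. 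Thus $\gamma$ is the contraction of the contravariant family $\gamma^a$ with the covariant family $\iota_{\hat{\partial}_a}\Volg$, and Lem.~\ref{lem:ContractionInvariant} gives $\Lie_{\rho(v)}\gamma=0$.

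Combining the two steps yields $\Lie_{\rho(v)}(L+\gamma)=0$ for every $v\in\calX(M)$, which is the asserted invariance and, by Def.~\ref{def:ManifestDiffSym}, identifies $\rho$ as a manifest diffeomorphism symmetry. I expect the only genuine obstacle to be the curvature step: one must confirm that the inhomogeneous second-derivative term in $\Lie_{\rho(v)}\Gamma^c_{ab}$ cancels upon assembling $\Riem_{abc}{}^d$, i.e.\ that the tensorial transformation of the Riemann tensor survives in the variational bicomplex. Every remaining step is formal bookkeeping with the covariance and contraction lemmas and the invariance of $\Volg$.
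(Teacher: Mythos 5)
Your treatment of the boundary form $\gamma$ is essentially identical to the paper's: both recognize $\gamma = \gamma^a\wedge\iota_{\hat{\partial}_a}\Volg$ as a full contraction of covariant against contravariant families and invoke Lems.~\ref{lem:WedgeCovariant}, \ref{lem:ContractionInvariant}, \ref{lem:InnnerDerConvariant}, \ref{lem:NablaCovariant} and \ref{lem:VolInvariant}; your explicit commutator computation for $\iota_{\hat{\partial}_a}\Volg$ is just the content of Lem.~\ref{lem:InnnerDerConvariant} specialized to an invariant form. For the Lagrangian, however, you take a genuinely different route. The paper does \emph{not} prove that $R$ is an invariant function; instead it computes $\Lie_{\xi_v}L = \iota_{\xi_v}EL + d\iota_{\xi_v}\gamma$ directly, using $\iota_{\xi_v}\delta g_{ab} = -(\nabla_a v_b + \nabla_b v_a)$, the contracted Bianchi identity $\nabla_a G^{ab}=0$, and the divergence formulas of Props.~\ref{prop:DivFormula1} and~\ref{prop:DivFormula2}, and then checks that the result equals $-\Lie_{\hat{v}}L$. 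That calculation is longer but pays for itself: the intermediate formula~\eqref{eq:LepagInv3} for $\iota_{\xi_v}\gamma$ is exactly what is reused afterwards to write down the Noether current~\eqref{eq:NoethCurrGR} and the explicit $\mu_1$. Your route, by contrast, is more uniform and conceptual --- once $\Riem_{abc}{}^d$ is known to be covariant in $a,b,c$ and contravariant in $d$, the invariance of $L = R\,\Volg$ is immediate from Lems.~\ref{lem:ContractionInvariant} and~\ref{lem:VolInvariant} --- but it does not by itself produce the explicit currents.

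The one place where you defer real work is the tensoriality of the Riemann tensor in the variational bicomplex, which you correctly identify as the crux. It does go through: the transformation law for $\Gamma^c_{ab}$ that you quote is established inside the proof of Lem.~\ref{lem:NablaCovariant}, and since $\Lie_{\rho(v)}\Lie_{\hat{\partial}_b} = \Lie_{\hat{\partial}_b}\Lie_{\rho(v)} - \tfrac{\partial v^{b'}}{\partial x^b}\Lie_{\hat{\partial}_{b'}}$ and the $\hat{\partial}_a$ commute, the classical cancellation of the third-derivative and $(\partial^2 v)\cdot\Gamma$ terms carries over verbatim. Be slightly careful with your ``conceptual'' alternative via $\Riem_{abc}{}^d\chi_d = (\nabla_a\nabla_b-\nabla_b\nabla_a)\chi_c$: iterating Lem.~\ref{lem:NablaCovariant} only shows that the \emph{contraction} $\Riem_{abc}{}^d\chi_d$ is covariant for each covariant test family $\chi_c$, and extracting the covariance of $\Riem_{abc}{}^d$ itself requires a supply of test families (note $\nabla g = 0$, so $g_{ab}$ is useless here, and $\delta g_{cd}$ carries two indices, giving a sum of two curvature terms). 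The direct coordinate computation is therefore the cleaner way to close this step. With that verification supplied, your argument is a correct and self-contained proof of the theorem.
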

\begin{proof}
The invariance must hold independently in every bidegree, so that we need to prove the two equations 
\begin{equation*}
  \Lie_{\xi_v + \hat{v}} L = 0 \,,\qquad
  \Lie_{\xi_v + \hat{v}} \gamma = 0 \,.
\end{equation*}
We start by proving the invariance of $L$. We have
\begin{equation}
\label{eq:LepagInv1}
\begin{split}
  \Lie_{\xi_v} L
  &= \iota_{\xi_v} \delta L
  = \iota_{\xi_v} (EL - d\gamma)
  \\
  &= \iota_{\xi_v} EL + d\iota_{\xi_v}\gamma
  \,.
\end{split}    
\end{equation}
We will compute both summands separately. First we use~\eqref{eq:xivEvol} to compute
\begin{equation*}
\begin{split}
  \iota_{\xi_v} \delta g_{ab}
  &= 
  - \Bigl( v^c g_{ab,c} 
  + \frac{\partial v^{a'}}{\partial x^a} g_{a'b}
  + \frac{\partial v^{b'}}{\partial x^b} g_{ab'}
  \Bigr)
  \\
  &= 
  - \bigl( v^c g_{ab,c} 
  + \partial_{\hat{\partial}_a}(v^c g_{cb}) - v^c g_{cb,a} 
  + \partial_{\hat{\partial}_b}(v^c g_{ac}) - v^c g_{ac,b} 
  \bigr)
  \\
  &= 
  - \bigl(
    \partial_{\hat{\partial}_a}v_b 
  + \partial_{\hat{\partial}_b}v_a
  - v_c g^{ce}(g_{ca,b} + g_{cb,a} - g_{ab,c})
  \bigr)
  \\
  &= 
  - \bigl(
    \partial_{\hat{\partial}_a}v_b 
  + \partial_{\hat{\partial}_b}v_a
  - v_c 2 \Gamma^c_{ab}
  \bigr)
  \\
  &= 
  - ( \nabla_a v_b + \nabla_b v_a )
  \,,
\end{split}
\end{equation*}
where we have used~\eqref{eq:ConnectionCoeffs}. With this formula we obtain
\begin{equation}
\label{eq:LepagInv2}
\begin{split}
  \iota_{\xi_v} EL
  &= G^{ab}(\nabla_a v_b + \nabla_b v_a) \Volg
  \\
  &= 2\bigl(\nabla_a (G^{ab} v_b)\bigr)\, \Volg
  \\
  &= d\bigl( 2 G^{ab} v_b \iota_{\hat{\partial}_a} \Volg \bigr)
  \,,
\end{split}    
\end{equation}
where in the last step we have used the divergence formula~\eqref{eq:dDiv1}. For the second term we compute
\begin{equation}
\label{eq:LepagInv3}
\begin{split}
  \iota_{\xi_v} \gamma 
  &= [\iota_{\xi_v} g^{ad} g^{bc}(\nabla_c \delta g_{ab} 
  - \nabla_a \delta g_{bc})] \wedge
  \iota_{\hat{\partial}_d} \Volg 
  \\
  &= g^{ad}g^{bc}[- \nabla_c(\nabla_a v_b + \nabla_b v_a) 
  + \nabla_a (\nabla_b v_c + \nabla_c v_b)] \,
  \iota_{\hat{\partial}_d} \Volg 
  \\
  &= g^{ad}g^{bc}[\nabla_c\nabla_a v_b - \nabla_c \nabla_b v_a 
  - 2 \nabla_c \nabla_a v_b
  + \nabla_a (\nabla_b v_c + \nabla_c v_b)] \,
  \iota_{\hat{\partial}_d} \Volg 
  \\
  &= [\nabla_c(\nabla^d v^c - \nabla^c v^d) 
  - 2 g^{ad} g^{bc}( \nabla_c \nabla_a - \nabla_a \nabla_c) v_b] \,
  \iota_{\hat{\partial}_d} \Volg 
  \\
  &= [\nabla_c(\nabla^d v^c - \nabla^c v^d)
  - 2 \Ric^{bd} v_b ] \,
  \iota_{\hat{\partial}_d} \Volg 
  \\
  &= - 2\Ric^{ab} v_a \,\iota_{\hat{\partial}_b} \Volg
  + 
  [\nabla_a (\nabla^b v^a - \nabla^a v^b) ] \, 
  \iota_{\hat{\partial}_b} \Volg
  \\
  &= 
  - 2\Ric^{ab} v_a \,\iota_{\hat{\partial}_b} \Volg
  - d \bigl( \tfrac{1}{2} (\nabla^a v^b - \nabla^b v^a) \, 
  \iota_{\hat{\partial}_a}\iota_{\hat{\partial}_b} \Volg
  \bigr)
  \,,
\end{split}
\end{equation}
where in the last step we have used the divergence formula~\eqref{eq:dDiv2}. Inserting~\eqref{eq:LepagInv2} and~\eqref{eq:LepagInv3} into the right hand side of~\eqref{eq:LepagInv1}, we obtain
\begin{equation*}
\begin{split}
  \Lie_{\xi_v} L
  &=
  2 d\bigl(
    G^{ab} v_b \iota_{\hat{\partial}_a} \Volg
    - 2\Ric^{ab} v_a \,\iota_{\hat{\partial}_b} \Volg
    \bigr)
  \\
  &=
  - d \bigl( R v^a \iota_{\hat{\partial}_a} \Volg \bigr)
  \\
  &= - \Lie_{\hat{v}} L
  \,,
\end{split}
\end{equation*}
which finishes the proof of the invariance of $L$.

It remains to prove the invariance of $\gamma$. The strategy of the proof is to show that all indices appearing in
\begin{equation*}
  \gamma 
  = g^{ad} g^{bc}(\nabla_c \delta g_{ab} 
  - \nabla_a \delta g_{bc})
  \wedge \iota_{\hat{\partial}_d} \Volg
\end{equation*}
are covariant or contravariant in the sense of Def.~\ref{def:CovariantForms}, so that their contraction is invariant by Lem.~\ref{lem:ContractionInvariant}.

We have shown in Lem.~\ref{lem:VolInvariant} that the volume form is invariant. It follows from Lem.~\ref{lem:InnnerDerConvariant} that the index $d$ of $\iota_{\hat{\partial}_d} \Volg$ is covariant. We have shown in Eq.~\eqref{eq:Lieginv} that the indices of $g^{ad}$ and $g^{bc}$ are contravariant. In Eq.~\eqref{eq:Liedeltag} we have seen that the indices of $\delta g_{bc}$ are covariant. It follows from Lem.~\ref{lem:NablaCovariant} that the indices of the covariant derivatives $\nabla_c$ and $\nabla_a$ are covariant. Lem.~\ref{lem:WedgeCovariant} shows that the wedge product is contravariant in all upper and covariant in all lower indices. With Lem.~\ref{lem:ContractionInvariant} we conclude that $\gamma$ is invariant.
\end{proof}

\begin{Theorem}
\label{thm:GRHomMomentum}
The action of spacetime vector fields on the infinite jet bundle of Lorentz metrics defined in~\eqref{eq:DiffActionGR} has a homotopy momentum map
\begin{equation*}
  \mu: \calX(M) \longrightarrow 
  L_\infty(J^\infty \Lor, EL + \delta\gamma)
  \,,
\end{equation*}
given by
\begin{equation*}
\begin{aligned}
  \mu_k: \wedge^k \calX(M)
  &\longrightarrow L_\infty(J^\infty \Lor, EL + \delta\gamma)
  \\
  \mu_k(v_1, \ldots, v_k) 
  &:= \iota_{\rho(v_1)} \cdots \iota_{\rho(v_k)} (L + \gamma)
  \,.  
\end{aligned}
\end{equation*}
\end{Theorem}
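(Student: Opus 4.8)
The plan is to obtain the statement as a direct corollary of the machinery assembled above, with no genuinely new computation required. The key observation is that the premultisymplectic form factors through the Lepage form as $\omega = EL + \delta\gamma = \Dtot(L + \gamma)$, so that $\lambda := L + \gamma$ is a $\Dtot$-primitive of $\omega$. This is precisely the exactness hypothesis of Prop.~\ref{prop:ExactMomentum}, read in the variational bicomplex with the single de Rham differential replaced by the total differential $\Dtot = \delta + d$.

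First I would invoke Thm.~\ref{thm:LepageInvariant}, which asserts exactly that $\Lie_{\rho(v)}(L + \gamma) = 0$ for every $v \in \calX(M)$; equivalently, that $\rho$ is a manifest diffeomorphism symmetry in the sense of Def.~\ref{def:ManifestDiffSym}. With this invariance of the primitive in hand, Prop.~\ref{prop:ManifestMomMap} (itself the specialization of Prop.~\ref{prop:ExactMomentum} to manifest symmetries) applies verbatim and yields a homotopy momentum map whose components are
\begin{equation*}
  \mu_k(v_1, \ldots, v_k) = \iota_{\rho(v_1)} \cdots \iota_{\rho(v_k)} (L + \gamma)
  \,,
\end{equation*}
which is the claimed formula. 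Thus the theorem follows immediately.

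For completeness I would record the degree bookkeeping that makes the formula consistent. Since $\lambda = L + \gamma$ has total degree $n$, the contraction $\iota_{\rho(v_1)} \cdots \iota_{\rho(v_k)} \lambda$ has total degree $n - k$, so $\mu_1(v)$ is an $(n-1)$-form and, for $k > 1$, $\mu_k$ lands in the degree $1-k$ part $\Omega^{n-k}$ of $L_\infty(J^\infty\Lor, \omega)$, vanishing automatically once $k > n$. The hamiltonian-pair relation $\iota_{\rho(v)}\omega = -\Dtot\mu_1(v)$ is immediate from Cartan's formula $\Dtot\iota_{\rho(v)}\lambda = \Lie_{\rho(v)}\lambda - \iota_{\rho(v)}\omega$ together with the invariance $\Lie_{\rho(v)}\lambda = 0$; the higher null-homotopy relations \eqref{eq:muNullHomotopy} are exactly those verified once and for all in the proof of Prop.~\ref{prop:ExactMomentum}.

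The only genuine difficulty has already been discharged upstream: it is the invariance of the boundary form $\gamma$ proved in Thm.~\ref{thm:LepageInvariant}, which rests on the covariant/contravariant index calculus (Lems.~\ref{lem:WedgeCovariant}, \ref{lem:ContractionInvariant}, \ref{lem:InnnerDerConvariant}, \ref{lem:NablaCovariant}, and~\ref{lem:VolInvariant}) and on the divergence formulas of Props.~\ref{prop:DivFormula1} and~\ref{prop:DivFormula2}. Granting that invariance, the present theorem carries no further obstacle; the substantive step is simply the recognition that the Einstein--Hilbert theory with its standard Lepage form fits the exact/manifest-symmetry template, after which the explicit momentum map is read off by inserting the fundamental vector fields $\rho(v_i)$ into $L + \gamma$.
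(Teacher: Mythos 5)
Your proposal is correct and follows exactly the paper's own route: the paper proves this theorem in one line by combining Thm.~\ref{thm:LepageInvariant} (invariance of the Lepage form) with Prop.~\ref{prop:ExactMomentum} (via Prop.~\ref{prop:ManifestMomMap}). Your additional degree bookkeeping and the Cartan-formula verification of the hamiltonian-pair relation are consistent elaborations of the same argument.
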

\begin{proof}
The proof follows from Thm.~\ref{thm:LepageInvariant} and Prop.~\ref{prop:ExactMomentum}.
\end{proof}

The Noether current, which was given in~\eqref{eq:NoetherCurrLFT} by the general formula $j_v = - \iota_{\hat{v}} L - \iota_{\xi_v} \gamma$, can be computed with~\eqref{eq:LepagInv3} to 
\begin{equation}
\label{eq:NoethCurrGR}
  j_v 
  = 2 G^{ab} v_a \wedge \iota_{\hat{\partial}_b} \Volg
  + d \bigl( \tfrac{1}{2} (\nabla^a v^b - \nabla^b v^a) \, 
  \iota_{\hat{\partial}_a}\iota_{\hat{\partial}_b} \Volg
  \bigr)
  \,.
\end{equation}
The $k=1$ component of the homotopy momentum map, which was given in~\eqref{eq:mu1} by the general formula $\mu_1(v) = - j_v + \iota_{\hat{v}}\gamma$, is
\begin{equation*}
\begin{split}
  \mu_1(v) 
  &= 
  - 2 G^{ab} v_a \wedge \iota_{\hat{\partial}_b} \Volg
  - d \bigl( \tfrac{1}{2} (\nabla^a v^b - \nabla^b v^a) \, 
  \iota_{\hat{\partial}_a}\iota_{\hat{\partial}_b} \Volg
  \bigr)
  \\
  &{}\quad
 + g^{ad} g^{bc}(\nabla_c \delta g_{ab} 
  - \nabla_a \delta g_{bc}) v^e 
    \wedge \iota_{\hat{\partial}_d} \iota_{\hat{\partial}_e} \Volg
  \,.
\end{split}    
\end{equation*}

\begin{Remark}
\label{rmk:NotEnergyMomentum}
The Noether current of a symmetry is determined only up to a $d$-closed form. Usually, the second summand of~\eqref{eq:NoethCurrGR} is dropped, so that the Noether current is $C^\infty(M)$-linear in $v$ and can be interpreted as the energy-momentum tensor $G^{ab}$. Here, we must take~\eqref{eq:NoethCurrGR} as Noether current so that $\mu$ is a homomorphism of $L_\infty$-algebras. 
\end{Remark}

\subsection*{Acknowledgements} This paper was branched out of a long and ongoing collaboration with Michele Schiavina and Alan Weinstein \cite{BlohmannSchiavinaWeinstein:2022} on the constraint problem of general relativity. They have contributed with invaluable discussions and encouraged me to publish the homotopical approach separately. I am indebted to Ya\"el Fr\'egier, Chris Rogers, and Marco Zambon for teaching me about homotopy momentum maps and for many illuminating discussions over the years. Finally, I would like to thank Janina Bernardy and Leonard Hofmann for valuable feedback on various versions of this paper. 

\bibliographystyle{alpha}
\bibliography{HomMom}

\end{document}